\definecolor{ddarkbrown}{rgb}{0.5,0.2,0.05} \definecolor{bbluegray}{rgb}{0.05,0,0.5}
\newtheorem{theorem}{Theorem}[section]
\newtheorem{proposition}[theorem]{Proposition}
\newtheorem{lemma}[theorem]{Lemma}
\newtheorem{corollary}[theorem]{Corollary}
\newenvironment{proof}{\textbf{Proof.}}{\QED\bigskip}
\newcommand{\BEAS}{\begin{eqnarray*}}
\newcommand{\EEAS}{\end{eqnarray*}}
\newcommand{\BEA}{\begin{eqnarray}}
\newcommand{\EEA}{\end{eqnarray}}
\newcommand{\BEQ}{\begin{equation}}
\newcommand{\EEQ}{\end{equation}}
\newcommand{\BIT}{\begin{itemize}}
\newcommand{\EIT}{\end{itemize}}
\newcommand{\BNUM}{\begin{enumerate}}
\newcommand{\ENUM}{\end{enumerate}}
\newcommand{\BA}{\begin{array}}
\newcommand{\EA}{\end{array}}
\newcommand{\reals}{{\mathbb R}}
\newcommand{\QED}{~~\rule[-1pt]{6pt}{6pt}}
\newcommand{\argmin}{\mathop{\rm argmin}}
\renewcommand{\algorithmicrequire}{\textbf{Input:}} 
\renewcommand{\algorithmicensure}{\textbf{Output:}}
\let \oldsection \section
\renewcommand{\section}{\vspace{3ex plus 1ex}\oldsection}
\newcommand{\Integer}{\mathbb{N}}
\title{Polyak Steps for Adaptive Fast Gradient Method}
\author{%
  Mathieu Barr\'e
  \And
  Alexandre d'Aspremont
}
\begin{document}

\maketitle

\begin{abstract}
Accelerated algorithms for minimizing smooth strongly convex functions usually require knowledge of the strong convexity parameter $\mu$. In the case of an unknown $\mu$, current adaptive techniques are based on restart schemes. When the optimal value $f^*$ is known, these strategies recover the accelerated linear convergence bound without additional grid search. In this paper we propose a new approach that has the same bound without any restart, using an online estimation of strong convexity parameter. We show the robustness of the Fast Gradient Method when using a sequence of upper bounds on $\mu$. We also present a good candidate for this estimate sequence and detail consistent empirical results.
\end{abstract}

\section{Introduction}
We focus on solving a generic optimization problem written
\BEQ
\min f(x)  \triangleq h(x) + \psi(x)
\EEQ
in the variable $x\in\reals^n$, where $h$ is a $L$-smooth, $\mu$-strongly convex function and $\psi(x)$ a convex penalty term. In the deterministic setting, classical convergence bounds show 
\BEQ\label{eq:slow-rate}
\textstyle f(x_k)-f(x_0) \leq \frac{L}{2} \left(1-\frac{\mu}{L}\right)^{k}\|x^*-x_0\|_2
\EEQ
after $k$ iterations of gradient descent with fixed step size, while accelerated proximal gradient descent methods yield iterates satisfying
\BEQ\label{eq:fast-rate}
\textstyle f(x_k)-f(x_0) \leq \frac{L}{2} \left(1-\sqrt{\frac{\mu}{L}}\right)^{k}\|x^*-x_0\|_2
\EEQ
after $k$ iterations, showing a significantly weaker dependence on the problem's condition number $\kappa=L/\mu$ (see \citep{Nest03} for a complete discussion). Similar rates have been obtained in the stochastic setting under the assumption that $h$ is a finite sum. Early work in \citep{Roux12,Shal13,John13,Xiao14,Defa14} produced algorithms with a slow rate roughly matching~\eqref{eq:slow-rate} in its dependence on the condition number. Improved algorithms \citep{Lin14a,Alle16a,Shal14,Lan18} obtain an accelerated rate similar to that in~\eqref{eq:fast-rate}, with \citep{Lan18} in particular showing that these bounds are unimprovable. All these results rely on a strong convexity assumption, with \citep{Arje17} showing that explicit knowledge of the strong convexity constant is required to get the fast rate using simple step size strategies. This remains a key limitation since the strong convexity constant is either unknown or poorly approximated in practice. 

The situation is more favorable in the deterministic setting, with~\citep{Nest13,Lin14b,Ferc16,Roul17,Rene18} showing that the fast rate can be achieved up to a factor $log(\kappa)$, using a restart strategy (the first three references have an extra $1/\mu$ factor in the bound). The results in \citep{Roul17} also show that the $log(\kappa)$ factor can be removed when the value of $f^*$ is known, so that restarted accelerated methods are fully adaptive to strong convexity constant (and other types of growth conditions for that matter). This assumption is often reduced to assuming $f^*=0$ (see e.g. \citep{Asi19} for a more complete discussion), and was used early on to devise better step size strategies for gradient methods, known as {\em Polyak steps} \citep{Poly69,Nedi02}. 

Our objective here is to remove the need for restart. From a practical point of view, while the theoretical bound in \citep{Roul17} is optimal, empirical performance can vary significantly with residual parameter settings. From a theoretical perspective, the need to use a restart scheme highlights the fact that current algorithms and/or convergence analysis fail to capture some key aspects of the problem's regularity properties. Restart schemes are a hack which achieve nearly optimal convergence rates, we seek to find better methods that alleviate the need for these schemes.

We make the following contributions.
\BIT
\item We bound the precision required in estimating the strong convexity parameter $\mu$ to get the fast convergence rate in~\eqref{eq:fast-rate}. In particular, we show that {\em sublinear} convergence in the estimate of $\mu$ is enough to guarantee fast {\em linear} convergence of the iterates.
\item Assuming $f^*$ is known, we detail an efficient strategy to produce local estimates of the strong convexity parameter $\mu$. This estimate has the added benefit of being local, hence better adapts to the geometry of the problem, further speeding up convergence compared to methods given a fixed initial bound on $\mu$.
\item We test our strategy on a variety of learning problems and show that our method often significantly outperforms restart schemes in practice. 
\EIT

\subsection*{Notation}
In what follows, $h$ will denote a $L$-smooth and $\mu$-strongly convex function, $\psi$ a lower-continuous proper convex function. $f(x) := h(x) + \psi(x)$ is then a $\mu$-strongly convex function and $x^*$ will denote the unique minimizer of $f$ on $\reals^n$. Let $f^*=f(x^*)$ be the optimal value of $f$.
$\psi$ will be supposed \textit{simple} enough so that for $\alpha > 0$ the gradient mapping $T_\alpha$
\BEQ
T_\alpha(y) = \underset{x \in \reals^n}{\argmin\;} h(y) + \nabla h(y)^T(x-y) + \frac{\alpha}{2}\|x-y\|^2 +\psi(x)
\EEQ 
can be computed explicitly. Finally the reduced gradient is defined as
\BEQ 
g_\alpha(y) = \alpha(y - T_\alpha(y)).
\EEQ

\section{Nesterov Acceleration of Smooth and Strongly Convex Functions}
In the following we seek to solve the optimization problem
\BEQ
\min f(x) := h(x) + \psi(x)
\EEQ
in the variable $x\in\reals^n$.
\subsection{APG with Known Strong Convexity Parameter}
A classical method for smooth and strongly convex minimization, when the strong convexity parameter is known, is the Accelerated Proximal Gradient (APG) described in Algorithm~\ref{algo:APG}.
\begin{algorithm}[h]
\caption{Accelerated Proximal Gradient}
\label{algo:APG}
\begin{algorithmic}
\STATE \algorithmicrequire\;$x_0 \in \reals^n$, $L$, $\mu$
\STATE $y_{-1} = y_0 = x_0, \beta = \frac{1-\sqrt{\frac{\mu}{L}}}{1+\sqrt{\frac{\mu}{L}}}$.
\FOR{$k \geq 0$}
\STATE $x_{k+1} = y_{k} + \beta(y_k - y_{k-1})$
\STATE $y_{k+1} = T_L(x_{k+1})$
\ENDFOR
\STATE \algorithmicensure\; $y_{k+1}$.
\end{algorithmic}
\end{algorithm}
It can be derived from the generic formulation of the Optimal Gradient Method in \citep[\S2.2.12-13]{Nest18}, using a good choice of estimate sequences and coefficients in order to get only two iterate sequences, $(x_i)_{i\in\Integer}$ and $(y_i)_{i\in\Integer}$, with simple updates. Algorithm~\ref{algo:FGM-const-gen} describes Algorithm~\ref{algo:APG} using an estimate sequence formulation that will prove useful when introducing an estimated strong convexity in the algorithm. A proof of this statement can be found in Appendix~\ref{ap:eq1-2}.
\begin{algorithm}[t]
\caption{APG estimate sequences formulation}
\label{algo:FGM-const-gen}
\begin{algorithmic}
\STATE \algorithmicrequire\;$x_0 \in \reals^n$, $L$, $\mu$
\STATE $A_0 = 1$, $a_0 = 1$, $y_0 = x_0$, $m_0(x) = a_0f^* $. $\kappa = \frac{\mu}{L}$.
\FOR{$k \geq 0$}
\STATE $v_k = \underset{x \in \reals^n}{\argmin}\; m_k(x) + \frac{a_0\mu}{2}\|x-x_0\|^2 $
\STATE $\boxed{a_{k+1} = \frac{\sqrt{\kappa}}{1-\sqrt{\kappa}}A_k}$ 
\STATE $ A_{k+1} = A_k + a_{k+1}$, $\tau_k = \frac{a_{k+1}}{A_{k+1}}$
\STATE $x_{k+1} = \frac{\tau_k}{1+\tau_k}v_k + \frac{1}{1+\tau_k}y_k$
\STATE $y_{k+1} = T_L(x_{k+1})$
\STATE $l_L(x,x_{k+1}) = f(T_L(x_{k+1})) + g_L(x_{k+1})^T(x-x_{k+1}) + \frac{1}{2L}\|g_L(x_{k+1})\|^2 $
\STATE $m_{k+1}(x) = m_k(x) + a_{k+1}\left(l_L(x,x_{k+1})+\frac{\mu}{2}\|x-x_{k+1}\|^2\right)$
\ENDFOR
\STATE \algorithmicensure\; $y_{k+1}$.
\end{algorithmic}
\end{algorithm} \\
We start with the following lemma from \citep{Lin14}, which is an extension of \citep[Th 2.2.13]{Nest18}, and will be used in the analysis.
\begin{lemma}
\label{lem:new-estimseq}
The following inequality holds for $x,y \in \reals^n$.
\BEQ
f(x) \geq f(T_L(y)) + g_L(y)^T(x-y) + \frac{1}{2L}\|g_L(y)\|^2 + \frac{\mu}{2}\|x-y\|^2
\EEQ
\end{lemma}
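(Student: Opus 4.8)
The plan is to exploit the variational definition of the gradient mapping $T_L(y)$ to extract a subgradient of $\psi$, and then assemble the claimed lower bound from the separate regularity properties of $h$ and $\psi$ (since $f=h+\psi$ is itself nondifferentiable, we cannot apply a single three-point inequality directly). Writing $T = T_L(y)$ and $g = g_L(y) = L(y-T)$, the first-order optimality condition for the minimization defining $T_L(y)$ reads $0 \in \nabla h(y) + L(T-y) + \partial\psi(T)$, so that $s := g - \nabla h(y)$ is a subgradient of $\psi$ at $T$. This single identity, $g = \nabla h(y) + s$ with $s \in \partial\psi(T)$, is what ties the smooth and nonsmooth parts together.

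Next I would write down three elementary inequalities, all anchored so that they can be added. First, $\mu$-strong convexity of $h$ at $y$ gives $h(x) \geq h(y) + \nabla h(y)^T(x-y) + \frac{\mu}{2}\|x-y\|^2$. Second, the subgradient inequality for $\psi$ at $T$ gives $\psi(x) \geq \psi(T) + s^T(x-T)$. Third, $L$-smoothness of $h$ (the descent lemma) gives the upper bound $h(T) \leq h(y) + \nabla h(y)^T(T-y) + \frac{L}{2}\|T-y\|^2$, which I rearrange into a lower bound on $h(y)$ so that $h(T)+\psi(T) = f(T)$ can be made to appear.

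Summing the first two inequalities and inserting the third, the terms involving $\nabla h(y)$ and $s$ collapse: using $\nabla h(y)^T(x-T) + s^T(x-T) = g^T(x-T)$ together with $T-y = -g/L$ (so that $\frac{L}{2}\|T-y\|^2 = \frac{1}{2L}\|g\|^2$), I obtain $f(x) \geq f(T) + g^T(x-T) - \frac{1}{2L}\|g\|^2 + \frac{\mu}{2}\|x-y\|^2$. The final step is the change of base point $x - T = (x-y) + g/L$, which converts $g^T(x-T)$ into $g^T(x-y) + \frac{1}{L}\|g\|^2$; combining the two $\|g\|^2$ contributions yields exactly $f(x) \geq f(T) + g^T(x-y) + \frac{1}{2L}\|g\|^2 + \frac{\mu}{2}\|x-y\|^2$, which is the stated inequality.

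I expect the only genuine subtlety to be the bookkeeping around the two distinct roles played by $h$: strong convexity is used to lower-bound $h(x)$ relative to the base point $y$, while smoothness is used to upper-bound $h(T)$, and one must check that the single $\nabla h(y)$ term produced in each cancels correctly against the subgradient $s$ to reconstitute the reduced gradient $g$. Everything else is routine algebra, provided one keeps consistent track of which point ($x$, $y$, or $T$) each inner product is anchored at.
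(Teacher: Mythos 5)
Your proof is correct and follows essentially the same route as the paper's: extract the subgradient $s = g_L(y)-\nabla h(y)\in\partial\psi(T_L(y))$ from the optimality condition of the gradient mapping, then combine strong convexity of $h$ at $y$, the descent lemma to pass from $h(y)$ to $h(T_L(y))$, and the subgradient inequality for $\psi$ at $T_L(y)$, finishing with the change of base point $x-T_L(y)=(x-y)+g_L(y)/L$. The bookkeeping you flag as the only subtlety is handled correctly, so nothing is missing.
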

\begin{proof}
proof in the Appendix~\ref{ap:new-estimseq}
\end{proof}
\begin{corollary}\label{cor:gap-strong}
\BEQ
f(x) - f^* \geq \frac{\mu}{2}\|x-x^*\|^2 , \;\forall x \in \reals^n\EEQ
\end{corollary}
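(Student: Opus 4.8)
The plan is to apply Lemma~\ref{lem:new-estimseq} at the single point $y = x^*$ and to exploit the fact that the minimizer $x^*$ is a fixed point of the gradient mapping $T_L$. The whole corollary is essentially the specialization of the lemma to $y = x^*$, so the only real work is to check that the gradient-mapping terms appearing in the lemma vanish at $x^*$.

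First I would establish that $T_L(x^*) = x^*$, and hence that $g_L(x^*) = L(x^* - T_L(x^*)) = 0$. This is the crux of the argument. Writing the first-order optimality condition for the proximal subproblem defining $T_L(x^*)$ gives $0 \in \nabla h(x^*) + L(T_L(x^*) - x^*) + \partial\psi(T_L(x^*))$, while optimality of $x^*$ for $f = h + \psi$ reads $0 \in \nabla h(x^*) + \partial\psi(x^*)$. Comparing the two conditions, and using that the prox subproblem has a unique minimizer (its objective is strongly convex), one concludes that $x^*$ itself satisfies the subproblem's optimality condition, so $T_L(x^*) = x^*$ and therefore $g_L(x^*) = 0$.

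Then I would substitute $y = x^*$ into the inequality of Lemma~\ref{lem:new-estimseq}. Since $T_L(x^*) = x^*$ we have $f(T_L(x^*)) = f(x^*) = f^*$, while the two gradient-dependent terms $g_L(x^*)^T(x - x^*)$ and $\frac{1}{2L}\|g_L(x^*)\|^2$ both vanish because $g_L(x^*) = 0$. The inequality then collapses to
\BEQ
f(x) \geq f^* + \frac{\mu}{2}\|x - x^*\|^2, \quad \forall x \in \reals^n,
\EEQ
which is exactly the claim. The main obstacle is the fixed-point identity $T_L(x^*) = x^*$: once the gradient-mapping terms are shown to vanish, the rest is a direct substitution. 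That step is where the definition of $T_L$ and the optimality conditions for both the composite objective and the proximal subproblem must be reconciled.
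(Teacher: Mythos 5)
Your proof is correct and follows exactly the route the paper implies by labeling this a corollary of Lemma~\ref{lem:new-estimseq} (the paper itself writes out no proof): specialize the lemma to $y = x^*$ and check that the gradient-mapping terms vanish there. Your verification that $T_L(x^*) = x^*$ via the optimality conditions of the proximal subproblem and of $f = h + \psi$, together with uniqueness of the subproblem's minimizer, is precisely the step that needs to be supplied, so nothing is missing.
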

Lemma~\ref{lem:new-estimseq} guarantees that the components of $m_k(x)$ of Algorithm~\ref{algo:FGM-const-gen} are lower bounds on $f(x)$. In particular, we have $m_k(x^*) \leq A_kf^*$. These estimate sequences have also the huge advantage to be strongly convex quadratic functions. Proposition~\ref{prop:proofAPG} now recalls the convergence bound of APG.
\begin{proposition}\label{prop:proofAPG}
After $k$ iterations the output $y_{k}$ of algorithm~\ref{algo:FGM-const-gen} satisfies
\BEQ
f(y_k) - f^* \leq \frac{\left(f(x_0) - f^*\right) + \frac{\mu}{2}\|x_0-x^*\|^2}{A_k}
\EEQ
and
\BEQ
A_k = \left(1-\sqrt{\frac{\mu}{L}}\right)^{-k}\; , \; \forall k \geq 0
\EEQ
\end{proposition}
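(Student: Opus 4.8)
The second identity is immediate and I would dispatch it first: the boxed update gives $A_{k+1}=A_k+a_{k+1}=A_k\big(1+\tfrac{\sqrt\kappa}{1-\sqrt\kappa}\big)=\tfrac{A_k}{1-\sqrt\kappa}$, so with $A_0=1$ a trivial induction yields $A_k=(1-\sqrt{\mu/L})^{-k}$ for all $k\ge 0$.

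For the convergence bound, the plan is to run the standard estimate-sequence argument while tracking the initial gap $R:=f(x_0)-f^*$. I would introduce the regularized estimate function $\hat m_k(x):=m_k(x)+\tfrac{a_0\mu}{2}\|x-x_0\|^2$, which is exactly the function minimized to produce $v_k$. Since $m_0$ is constant and each update $m_{k+1}=m_k+a_{k+1}\big(l_L(\cdot,x_{k+1})+\tfrac{\mu}{2}\|\cdot-x_{k+1}\|^2\big)$ adds a term linear in $x$ plus a $\tfrac{a_{k+1}\mu}{2}\|\cdot-x_{k+1}\|^2$ quadratic, a short induction shows $\hat m_k$ is a strongly convex quadratic with Hessian $\mu A_k I$, hence admits the canonical form $\hat m_k(x)=\hat m_k^*+\tfrac{\mu A_k}{2}\|x-v_k\|^2$, where $\hat m_k^*=\min_x\hat m_k(x)$ and $v_k$ is its minimizer.

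The core is the invariant $A_k f(y_k)\le \hat m_k^*+R$, which I would prove by induction on $k$. The base case $k=0$ holds with equality, since $A_0=1$, $y_0=x_0$, and $\hat m_0^*=f^*$. For the inductive step I would substitute the canonical form of $\hat m_k$ into the update for $m_{k+1}$ and complete the square to obtain a recursion of the shape $\hat m_{k+1}^*=\hat m_k^*+a_{k+1}f(y_{k+1})+\tfrac{a_{k+1}}{2L}\|g_L(x_{k+1})\|^2+(\text{cross terms in }g_L)$. Feeding in the induction hypothesis $\hat m_k^*\ge A_k f(y_k)-R$ and the target value $A_{k+1}f(y_{k+1})$, the claim reduces to showing $A_k\big(f(y_k)-f(y_{k+1})\big)+\tfrac{a_{k+1}}{2L}\|g_L(x_{k+1})\|^2+(\text{cross terms})\ge 0$. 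Here I would invoke Lemma~\ref{lem:new-estimseq} with $x=y_k$, $y=x_{k+1}$ to lower bound $f(y_k)-f(y_{k+1})$ by $g_L(x_{k+1})^T(y_k-x_{k+1})+\tfrac{1}{2L}\|g_L(x_{k+1})\|^2$; the boxed value of $a_{k+1}$ together with the definition $x_{k+1}=\tfrac{\tau_k}{1+\tau_k}v_k+\tfrac{1}{1+\tau_k}y_k$ are engineered precisely so that the remaining $g_L$-linear and $g_L$-quadratic terms cancel, leaving a nonnegative quantity.

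Finally, I would combine the invariant with the upper estimate $\hat m_k^*\le \hat m_k(x^*)=m_k(x^*)+\tfrac{\mu}{2}\|x^*-x_0\|^2\le A_k f^*+\tfrac{\mu}{2}\|x_0-x^*\|^2$, where the last inequality uses $m_k(x^*)\le A_k f^*$ noted after Corollary~\ref{cor:gap-strong}. This yields $A_k f(y_k)\le A_k f^*+\tfrac{\mu}{2}\|x_0-x^*\|^2+R$, and dividing by $A_k$ gives the stated bound. I expect the main obstacle to be the completing-the-square computation of $\hat m_{k+1}^*$ and the bookkeeping verifying that the cross-terms vanish under the chosen $a_{k+1}$ and $x_{k+1}$; everything else is routine.
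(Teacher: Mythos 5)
Your proposal is correct and follows essentially the same route as the paper's own proof in Appendix~\ref{ap:proofAPG}: the identity for $A_k$ via the boxed recursion, the invariant $A_k f(y_k)\le \min_x\{m_k(x)+\tfrac{a_0\mu}{2}\|x-x_0\|^2\}+(f(x_0)-f^*)$ proved by induction using the canonical quadratic form of the regularized estimate sequence, Lemma~\ref{lem:new-estimseq} applied at $(y_k,x_{k+1})$, the cancellation of the $g_L$-linear terms forced by the choices of $a_{k+1}$ and $x_{k+1}$, and the final combination with $m_k(x^*)\le A_k f^*$. The completing-the-square bookkeeping you flag as the remaining obstacle is exactly the chain of inequalities the paper carries out explicitly, and it closes as you anticipate.
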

\begin{proof}
A complete proof using estimate sequence is given in Appendix~\ref{ap:proofAPG}.
\end{proof}\\
This result shows a linear convergence rate in $\left(1-\sqrt{\frac{\mu}{L}}\right)^k$. A linesearch on the smoothness parameter $L$ can be added to the algorithm without losing the convergence bound \cite{Lin14}. In Algorithms~\ref{algo:APG} and~\ref{algo:FGM-const-gen} the strong convexity parameter is given as an input, and is typically hard to estimate. When a misspecified $\hat{\mu} \neq \mu$ is given, two cases are to be distinguished. In the case where we have a lower bound on $\mu$, the proof of Proposition~\ref{prop:proofAPG} still applies because $\mu$ is only used in lower bounds. Linear convergence is preserved and the rate of convergence becomes $(1-\sqrt{\frac{\hat{\mu}}{L}})^k$. When $\hat{\mu}$ is only an upper bound on $\mu$, the previous results only guarantee that the iterates of APG will not blow up (cf. see for instance \citep[Lemma 10]{Lin14}). In what follows we present robustness result on APG, when using an upper bounding sequence that converges to $\mu$ at a sublinear rate.

\subsection{APG with Estimates of Strong Convexity Parameter}
The main result of this section is that for all $k \geq 0$, a sequence $(\mu_i)_{i\in\Integer}$ such that \[0 \leq \mu_i-\mu \leq \frac{C}{(i+1)^2}\] for $i \in [|1,k|]$ so $\mu_i$ converges at a sublinear rate towards $\mu$, allows us to compute $y_k\in\reals^n$ such that \[f(y_k) - f^* \leq C_0\left(1-\sqrt{\frac{\mu}{L}}\right)^k,\] i.e. $f(y_k)$ converges at a linear rate towards $f^*$.

Let $(\mu_i)_{i\in\Integer}$ be a positive real sequence such that $\mu_i \geq \mu, \forall i\geq 0$. Suppose that the $\mu_i$ are available in an online setting, meaning that the $i$-th term can be used at the $i$-th iteration of the algorithm. In the formulation of Algorithm~\ref{algo:FGM-const-gen}, two properties have to be satisfied at each iteration to obtain the convergence bound of Proposition~\ref{prop:proofAPG}.
\BEQ
\left .
    \begin{array}{ll}
        (P_1^k) : & m_k(x^*) \leq A_kf^*  \\
        (P_2^k) : & A_kf(y_k) \leq f(x_0) - f^* + \underset{x \in \reals^n}{\min\;} m_k(x) + \frac{a_0\mu}{2}\|x-x_0\|^2 
    \end{array}
\right \} k \geq 0
\EEQ
The $m_k$ are modified in order to incorporate the strong convexity estimator.
\BEQ
\left \{
    \begin{array}{ll}
        m_0(x) = a_0f^*&\\
    m_{k+1}(x) = m_k(x) + a_{k+1}\left(l_L(x,x_{k+1})+\frac{\mu_{k+1}}{2}\|x-x_{k+1}\|^2 \right), k \geq 0 
    \end{array}
\right .
\EEQ 

Adding these estimate sequences in the APG scheme yields Algorithm~\ref{algo:FGM-const-adapt}.
\begin{algorithm}[h]
\caption{AdaptAPG}
\label{algo:FGM-const-adapt}
\begin{algorithmic}
\STATE \algorithmicrequire\;$x_0 \in \reals^n$, $L$, $(\mu_i)_{i\in\reals^n}$
\STATE $A_0 = 1$, $a_0 = 1$, $y_0 = x_0$, $m_0(x) = a_0f^* $.
\FOR{$k \geq 0$}
\STATE $\kappa_{k} = \frac{\mu_{k+1}}{L}$
\STATE $v_k = \underset{x \in \reals^n}{\argmin}\; m_k(x) + \frac{a_0\mu_0}{2}\|x-x_0\|^2 $
\STATE $\boxed{a_{k+1} = \frac{\sqrt{\kappa_{k}}}{1-\sqrt{\kappa_k}}A_k}$ 
\STATE $ A_{k+1} = A_k + a_{k+1}$, $\tau_k = \frac{a_{k+1}}{A_{k+1}}$
\STATE $x_{k+1} = \frac{\tau_k}{1+\tau_k}v_k + \frac{1}{1+\tau_k}y_k$
\STATE $y_{k+1} = T_L(x_{k+1})$
\STATE $l_L(x,x_{k+1}) = f(T_L(x_{k+1})) + g_L(x_{k+1})^T(x-x_{k+1}) + \frac{1}{2L}\|g_L(x_{k+1})\|^2 $
\STATE $m_{k+1}(x) = m_k(x) + a_{k+1}\left(l_L(x,x_{k+1})+\frac{\mu_{k+1}}{2}\|x-x_{k+1}\|^2\right)$
\ENDFOR
\STATE \algorithmicensure\; $y_{k+1}$.
\end{algorithmic}
\end{algorithm} 
With this choice of recurrence for $a_{k}$, the proximal update for $y_{k+1}$ is preserved. However in this case $x_{k+1}$ can no longer be expressed as a combination of $y_k$ and $y_{k-1}$. In addition, the algorithm keeps the same form of updates as before, ensuring the property $(P_2^k)$ to be preserved at each iteration. However, $(P_1^k)$ relied on the strong convexity lower bounds induced by $\mu$, and these bounds do not hold anymore with  $\mu_i$, introducing additional error terms. Proposition~\ref{prop:APG-estim-bound} below thus gives a preliminary bound on the primal gap depending on the distance between the $\mu_i$ and $\mu$.

\begin{proposition}\label{prop:APG-estim-bound}
Given a non increasing sequence of estimate $(\mu_i)_{i\in \Integer}$ such that $\mu_i \geq \mu ,\forall i>0$, the output of Algorithm~\ref{algo:FGM-const-adapt} after $k$ iterations satisfies
\BEQ
f(y_k) - f^* \leq \frac{f(x_0) - f^* + \frac{a_0\mu}{2}\|x_0 - x^*\|^2}{A_k} + \displaystyle\sum_{i=0}^k\frac{a_i}{2A_k}(\mu_i-\mu)\|x_i-x^*\|^2
\EEQ
and
\BEQ
A_k = \displaystyle\prod_{i=1}^k\left(1-\sqrt{\frac{\mu_i}{L}} \right)^{-1}
\EEQ
\end{proposition}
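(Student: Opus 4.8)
The plan is to run the standard estimate-sequence induction, isolating the single place where the upper bounds $\mu_i$ (rather than the true $\mu$) degrade the argument. First I would dispose of the formula for $A_k$, which follows purely from the boxed recurrence: since $a_{k+1} = \tfrac{\sqrt{\kappa_k}}{1-\sqrt{\kappa_k}}A_k$ with $\kappa_k = \mu_{k+1}/L$, we get $A_{k+1} = A_k + a_{k+1} = A_k\big(1-\sqrt{\mu_{k+1}/L}\big)^{-1}$, and telescoping from $A_0 = 1$ gives $A_k = \prod_{i=1}^k\big(1-\sqrt{\mu_i/L}\big)^{-1}$.

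Next I would establish a perturbed version of property $(P_1^k)$. Since $m_{k+1}$ adds $a_{k+1}\big(l_L(x,x_{k+1}) + \tfrac{\mu_{k+1}}{2}\|x-x_{k+1}\|^2\big)$, I would split $\tfrac{\mu_{k+1}}{2}\|x-x_{k+1}\|^2 = \tfrac{\mu}{2}\|x-x_{k+1}\|^2 + \tfrac{\mu_{k+1}-\mu}{2}\|x-x_{k+1}\|^2$ and apply Lemma~\ref{lem:new-estimseq} in the form $l_L(x,x_{k+1}) + \tfrac{\mu}{2}\|x-x_{k+1}\|^2 \le f(x)$. Evaluated at $x=x^*$, each step contributes $a_{k+1}f^*$ plus an error $\tfrac{a_{k+1}}{2}(\mu_{k+1}-\mu)\|x^*-x_{k+1}\|^2$, so that induction from $m_0(x^*) = a_0f^*$ yields $m_k(x^*) \le A_k f^* + \sum_{i=1}^k \tfrac{a_i}{2}(\mu_i-\mu)\|x_i-x^*\|^2$. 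This is where the hypothesis $\mu_i \ge \mu$ is consumed, guaranteeing that the error terms are nonnegative rather than corrupting the sign of the bound.

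The main obstacle is arguing that $(P_2^k)$ survives, since this is the step that silently used $\mu$ in the constant-parameter analysis. There the estimate function $m_k + \tfrac{a_0\mu}{2}\|\cdot-x_0\|^2$ has curvature exactly $\mu A_k$, and the coefficient identity $L a_{k+1}^2 = \mu A_{k+1}^2$ is equivalent to $L a_{k+1}^2 = \gamma_{k+1}A_{k+1}$ (with $\gamma_{k+1}$ the curvature); this identity is precisely what lets the $\tfrac{1}{2L}\|g_L(x_{k+1})\|^2$ terms combine with the descent inequality $l_L(y_k,x_{k+1}) \le f(y_k)$ to close the induction. In the adaptive scheme the curvature is $\gamma_k = a_0\mu_0 + \sum_{i=1}^k a_i\mu_i$, which no longer equals $\mu_{k+1}A_k$. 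Here I would invoke that $(\mu_i)$ is \emph{non-increasing}: because $\mu_i \ge \mu_{k+1}$ for every $i \le k$, we have $\gamma_k \ge \mu_{k+1}A_k$, hence $\gamma_{k+1} \ge \mu_{k+1}A_{k+1}$ and therefore $L a_{k+1}^2 = \mu_{k+1}A_{k+1}^2 \le \gamma_{k+1}A_{k+1}$. The inequality points in the favorable direction — extra curvature only enlarges the minimum $\min_x\big(m_k(x)+\tfrac{a_0\mu_0}{2}\|x-x_0\|^2\big)$ — so the constant-$\mu$ computation behind $(P_2^k)$ carries over as an inequality. The delicate point to verify is that the cross term $g_L(x_{k+1})^T(v_k-x_{k+1})$, which cancels exactly when $\gamma_k/\gamma_{k+1} = A_k/A_{k+1}$ in the constant case, is now controlled by the residual $\|g_L(x_{k+1})\|^2$ and $\|v_k-x_{k+1}\|^2$ terms; the convex-combination update $x_{k+1} = \tfrac{\tau_k}{1+\tau_k}v_k + \tfrac{1}{1+\tau_k}y_k$ with $\tau_k = a_{k+1}/A_{k+1}$ is exactly what makes the corresponding completion of squares nonnegative.

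Finally I would combine the two properties. Bounding the minimum in $(P_2^k)$ by its value at $x^*$ gives $A_k f(y_k) \le f(x_0)-f^* + m_k(x^*) + \tfrac{a_0\mu_0}{2}\|x^*-x_0\|^2$; substituting the perturbed $(P_1^k)$ and writing $\tfrac{a_0\mu_0}{2} = \tfrac{a_0\mu}{2} + \tfrac{a_0(\mu_0-\mu)}{2}$ turns the initial regularization into the $i=0$ term of the error sum. Dividing by $A_k$ then produces exactly
\[
f(y_k)-f^* \le \frac{f(x_0)-f^* + \tfrac{a_0\mu}{2}\|x_0-x^*\|^2}{A_k} + \sum_{i=0}^k \frac{a_i}{2A_k}(\mu_i-\mu)\|x_i-x^*\|^2,
\]
which is the claimed bound.
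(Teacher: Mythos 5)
Your proposal is correct and follows essentially the same route as the paper's proof in Appendix~\ref{ap:APG-estim-bound}: the telescoped product for $A_k$, the perturbed bound $m_k(x^*) \le A_k f^* + \sum_{i=1}^k \tfrac{a_i}{2}(\mu_i-\mu)\|x_i-x^*\|^2$ obtained by splitting off the $\tfrac{\mu_i-\mu}{2}\|\cdot\|^2$ excess before applying Lemma~\ref{lem:new-estimseq}, the observation that non-increasingness gives the estimate function curvature at least $A_k\mu_{k+1}$ so the step-$k$ completion of squares (with $\tau_k=\sqrt{\kappa_k}$ and the exact cancellation $(1-\tau_k)y_k+\tau_k z_{k+1}-x_{k+1}=0$) closes the induction for $(P_2^k)$, and the absorption of $\tfrac{a_0(\mu_0-\mu)}{2}\|x_0-x^*\|^2$ into the $i=0$ term. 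The only cosmetic quibble is that the hypothesis $\mu_i\ge\mu$ is not actually what makes the perturbed $(P_1^k)$ valid (the decomposition is exact either way); what the proof genuinely consumes is the monotonicity, exactly as you use it in the curvature step.
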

\begin{proof}
The proof of this result is essentially the same as of Proposition~\ref{prop:proofAPG} and is completely detailed in Appendix~\ref{ap:APG-estim-bound}.
\end{proof}\\
Our goal now is to control the right hand side given sufficient conditions on the gaps $\mu_i - \mu$. In the strongly convex case, the behaviour of the distance to the optimum of the second iterate sequence $(x_i)$ can be controlled. The following lemma uses the form of the update in $x_{k+1}$ as a convex combination of $v_k$ and $y_k$ to bound $\|x_k - x_0\|^2$.

\begin{lemma}\label{lem:bound_sec_seq}
Given $(\mu_i)_{i\in\Integer}$ an non increasing upperbounding sequence of $\mu$. $(x_i)_{i\in\Integer}$ is a sequence defined as in Algorithm~\ref{algo:FGM-const-adapt} on $f$ using $(\mu_i)_{i\in\Integer}$.
\BEQ
\|x_{k+1}-x^*\|^2 \leq \frac{2(f(x_0) - f^*) + \frac{a_0\mu}{2}\|x_0-x^*\|^2}{A_k\mu} + \displaystyle\sum_{i=0}^k\frac{a_i}{A_k}\frac{\mu_i - \mu}{\mu}\|x_i-x^*\|^2 \;,\; \forall k \geq 0
\EEQ
\end{lemma}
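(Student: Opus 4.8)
The plan is to exploit the update rule $x_{k+1} = \frac{\tau_k}{1+\tau_k}v_k + \frac{1}{1+\tau_k}y_k$, which expresses $x_{k+1}-x^*$ as a convex combination of $v_k-x^*$ and $y_k-x^*$. By convexity of $\|\cdot-x^*\|^2$,
\[
\|x_{k+1}-x^*\|^2 \le \tfrac{\tau_k}{1+\tau_k}\|v_k-x^*\|^2 + \tfrac{1}{1+\tau_k}\|y_k-x^*\|^2,
\]
so it suffices to dominate each of $\|v_k-x^*\|^2$ and $\|y_k-x^*\|^2$ by the right-hand side of the claim: a convex combination of two quantities bounded by the same expression is again bounded by it. I would handle the two terms separately, since $v_k$ is tied to the estimate sequence while $y_k$ is tied to the already-proved convergence bound.

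The $y_k$ term is the easy one and reuses earlier results verbatim. Corollary~\ref{cor:gap-strong} gives $\frac{\mu}{2}\|y_k-x^*\|^2 \le f(y_k)-f^*$, and Proposition~\ref{prop:APG-estim-bound} bounds $f(y_k)-f^*$ by $\frac{1}{A_k}\big(f(x_0)-f^*+\frac{a_0\mu}{2}\|x_0-x^*\|^2\big)$ plus the error sum $\sum_{i}\frac{a_i}{2A_k}(\mu_i-\mu)\|x_i-x^*\|^2$. Dividing this value-gap bound by $\mu/2$ reproduces precisely the target right-hand side, so nothing new is needed here.

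The technical heart is the $v_k$ term, handled through the quadratic structure of the estimate sequences. I would first note that every quadratic piece added to $m_k$ is of the form $\frac{c}{2}\|x-p\|^2$ with scalar $c$, so $\phi_k(x):=m_k(x)+\frac{a_0\mu_0}{2}\|x-x_0\|^2$ has an isotropic Hessian and is exactly $\gamma_k$-strongly convex with $\gamma_k=\sum_{i=0}^k a_i\mu_i \ge \mu A_k$ (using $\mu_i\ge\mu$ and $A_k=\sum_{i=0}^k a_i$), minimized at $v_k$. Isotropy is what makes the expansion an \emph{equality}, $\frac{\gamma_k}{2}\|v_k-x^*\|^2=\phi_k(x^*)-\phi_k^*$, reducing the problem to bounding the value gap of $\phi_k$ at $x^*$. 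For the upper bound on $\phi_k(x^*)$ I would apply Lemma~\ref{lem:new-estimseq} with $y=x_i$ to each affine piece, giving $l_L(x^*,x_i)\le f^*-\frac{\mu}{2}\|x^*-x_i\|^2$; summing with weights $a_i$ and adding $\frac{a_0\mu_0}{2}\|x^*-x_0\|^2$ collapses the true-$\mu$ contribution into $A_k f^*+\frac{a_0\mu}{2}\|x_0-x^*\|^2$ and leaves exactly the nonnegative error terms $\sum_i\frac{a_i}{2}(\mu_i-\mu)\|x_i-x^*\|^2$ (this is the modified $(P_1^k)$). For the lower bound on $\phi_k^*$ I would invoke property $(P_2^k)$, preserved by the algorithm, namely $A_k f(y_k)\le f(x_0)-f^*+\phi_k^*$, together with $f(y_k)\ge f^*$, to obtain $\phi_k^*\ge A_k f^*-(f(x_0)-f^*)$. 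Subtracting the two bounds and dividing by $\gamma_k/2\ge \mu A_k/2$ yields the required estimate on $\|v_k-x^*\|^2$.

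The main obstacle is bookkeeping rather than any single deep step. The nonstandard recurrence $a_{k+1}=\frac{\sqrt{\kappa_k}}{1-\sqrt{\kappa_k}}A_k$ forces $\gamma_k$ to be tracked termwise, and the cancellation that produces the clean $A_k f^*$ term depends on peeling off precisely the true-$\mu$ part of each $\frac{\mu_i}{2}\|\cdot\|^2$ so that only the nonnegative gaps $\mu_i-\mu$ survive in the residual sum. Care is also needed with the constant on $\|x_0-x^*\|^2$: the naive split of the $\frac{a_0\mu_0}{2}\|x_0-x^*\|^2$ term into its $\mu$ and $(\mu_0-\mu)$ parts, followed by division by $\gamma_k\ge\mu A_k$, produces a coefficient on $\|x_0-x^*\|^2$ that must be reconciled with the $\frac{a_0\mu}{2}$ appearing in the statement, so the constants of the $v_k$ and $y_k$ bounds should be compared explicitly before concluding with the convex-combination inequality.
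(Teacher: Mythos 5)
Your proposal is correct and follows essentially the same route as the paper's proof in Appendix~\ref{ap:bound_sec_seq}: the convex-combination split of $x_{k+1}$ into $v_k$ and $y_k$, the $\bigl(\sum_i a_i\mu_i\bigr)$-strong convexity of $\phi_k$ combined with the upper bound \eqref{eq:P1_est} on $\phi_k(x^*)$ and the lower bound $\phi_k^*\ge A_kf(y_k)-(f(x_0)-f^*)$, and Corollary~\ref{cor:gap-strong} together with Proposition~\ref{prop:APG-estim-bound} for the $y_k$ term. Your remark about reconciling the coefficient of $\|x_0-x^*\|^2$ is well taken --- the paper's derivation actually yields $a_0\mu\|x_0-x^*\|^2$ in the numerator where the lemma statement writes $\frac{a_0\mu}{2}\|x_0-x^*\|^2$, a harmless constant discrepancy.
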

\begin{proof}
See Appendix~\ref{ap:bound_sec_seq}
\end{proof}\\
The recurrence equation that defines the $a_k$ allows for a simple bound on the ratio $\frac{a_{k+1}}{A_k}$.
\begin{lemma}\label{lem:bound-ak}
For $(a_i)$ and $(A_i)$ defined as in Algorithm~\ref{algo:FGM-const-adapt}
\BEQ
\frac{a_{k+1}}{A_k} = \frac{\sqrt{\frac{\mu_{k+1}}{L}}}{1-\sqrt{\frac{\mu_{k+1}}{L}}} \leq C_1 = \frac{\sqrt{\frac{\mu_{0}}{L}}}{1-\sqrt{\frac{\mu_{0}}{L}}} 
\EEQ
\end{lemma}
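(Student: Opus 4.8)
The plan is to separate the claim into the stated equality and the subsequent inequality. The equality $\frac{a_{k+1}}{A_k} = \frac{\sqrt{\mu_{k+1}/L}}{1-\sqrt{\mu_{k+1}/L}}$ is immediate from the construction of the method: the boxed recurrence in Algorithm~\ref{algo:FGM-const-adapt} sets $a_{k+1} = \frac{\sqrt{\kappa_k}}{1-\sqrt{\kappa_k}}A_k$ with $\kappa_k = \mu_{k+1}/L$, so dividing both sides by $A_k$ (which is strictly positive, being a sum of positive $a_i$) yields the expression directly. No work is needed here beyond recalling the definition.

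For the inequality, I would argue by monotonicity of a single scalar function. Writing $\phi(t) = \frac{\sqrt{t/L}}{1-\sqrt{t/L}}$ and substituting $s = \sqrt{t/L}$, we have $\phi(t) = \frac{s}{1-s}$, which is well-defined for $t \in (0,L)$, i.e. $s \in (0,1)$. A one-line computation gives $\frac{d}{ds}\frac{s}{1-s} = \frac{1}{(1-s)^2} > 0$, so $s \mapsto \frac{s}{1-s}$ is increasing; since $t \mapsto \sqrt{t/L}$ is increasing as well, the composition $\phi$ is increasing on $(0,L)$. I would then invoke the hypothesis that $(\mu_i)_{i\in\Integer}$ is non-increasing, which gives $\mu_{k+1} \leq \mu_0$, and apply monotonicity to conclude $\phi(\mu_{k+1}) \leq \phi(\mu_0) = C_1$, exactly the claimed bound.

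The only point that requires any care is ensuring $\phi$ is finite, i.e. that $\sqrt{\mu_{k+1}/L} < 1$, so that the denominator does not vanish or turn negative; this is needed both for $\phi$ to be defined and for the recurrence to produce positive $a_{k+1}$ in the first place. It holds because $\mu_{k+1} \leq \mu_0 < L$, the strict upper bound $\mu_0 < L$ being implicit in the well-posedness of Algorithm~\ref{algo:FGM-const-adapt}. Beyond this mild well-definedness check there is no substantive obstacle: the lemma is essentially the recurrence read off directly, combined with the monotonicity of the scalar map $\phi$ and the monotonicity of the estimate sequence $(\mu_i)$.
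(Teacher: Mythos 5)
Your proof is correct and follows the same route as the paper, which simply remarks that $(\mu_i)$ is non-increasing; you merely make explicit the monotonicity of $t \mapsto \frac{\sqrt{t/L}}{1-\sqrt{t/L}}$ and the well-definedness condition $\mu_{k+1} < L$ that the paper leaves implicit.
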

\begin{proof}
$(\mu_i)$ non increasing.
\end{proof}\\
In the next Lemma, we show that when $\mu_i$ converges to $\mu$ at a summable rate, then $\|x_k -x^*\|^2$ converges to 0 with the same speed as $f(y_k) - f^*$. 
\begin{lemma}\label{lem:summable-error}
Given a non increasing sequence $(\mu_i)_{i\in\Integer}$ satisfying
\BEQ
0 \leq \mu_k - \mu \leq \frac{C}{(k+1)^2} \;,\; \forall k \geq 1
\EEQ
with $C \leq \frac{\mu}{3C_1}$ with $C_1$ defined as in Lemma~\ref{lem:bound-ak}. Then for $(a_i)$ and $(x_i)$ defined as in Algorithm~\ref{algo:FGM-const-adapt}
\BEQ
a_k(\mu_k -\mu)\|x_k-x^*\|^2 \leq \frac{C_0}{(k+1)^2}\;,\; \forall k \geq 0
\EEQ
with $C_0 = \max(a_0(\mu_0 - \mu)\|x_0-x^*\|^2,2(f(x_0)-f^*) + a_0\mu\|x_0-x^*\|^2)$
\end{lemma}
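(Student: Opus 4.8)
The plan is to prove the bound by strong induction on $k$, writing $S_k := a_k(\mu_k-\mu)\|x_k-x^*\|^2$ for the quantity to be controlled and showing $S_k \le C_0/(k+1)^2$ for every $k\ge 0$. The base case $k=0$ is immediate from the definition of $C_0$: since $C_0 \ge a_0(\mu_0-\mu)\|x_0-x^*\|^2 = S_0$ and $C_0/(0+1)^2 = C_0$, we get $S_0 \le C_0$. This also cleanly disposes of the index $k=0$, which is not covered by the hypothesis $0\le\mu_k-\mu\le C/(k+1)^2$ (only assumed for $k\ge 1$); the contraction argument below will always invoke that hypothesis at an index $k+1\ge 1$.

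For the induction step, I assume $S_i \le C_0/(i+1)^2$ for all $i\le k$ and estimate $S_{k+1}$. The central move is to substitute the bound of Lemma~\ref{lem:bound_sec_seq} for $\|x_{k+1}-x^*\|^2$ into $S_{k+1}=a_{k+1}(\mu_{k+1}-\mu)\|x_{k+1}-x^*\|^2$ and distribute, which splits $S_{k+1}$ into a ``source'' term and a ``summation'' term. Both terms contain the factor $a_{k+1}/A_k$, bounded by $C_1$ via Lemma~\ref{lem:bound-ak}, and the factor $\mu_{k+1}-\mu$, bounded by $C/(k+2)^2 \le \mu/\bigl(3C_1(k+2)^2\bigr)$ using $C\le\mu/(3C_1)$. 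Consequently each term picks up the common dimensionless factor $\frac{a_{k+1}}{A_k}\frac{\mu_{k+1}-\mu}{\mu}\le \frac{1}{3(k+2)^2}$, in which the $C_1$'s and $\mu$'s cancel.

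It then remains to bound what each factor multiplies. For the source term, the numerator $2(f(x_0)-f^*)+\frac{a_0\mu}{2}\|x_0-x^*\|^2$ from Lemma~\ref{lem:bound_sec_seq} is at most $2(f(x_0)-f^*)+a_0\mu\|x_0-x^*\|^2$, i.e.\ at most the second argument of the max defining $C_0$, hence at most $C_0$; so this term is $\le \tfrac{1}{3}\,C_0/(k+2)^2$. For the summation term, factoring out $\frac{a_{k+1}}{A_k}\frac{\mu_{k+1}-\mu}{\mu}$ leaves exactly $\sum_{i=0}^k a_i(\mu_i-\mu)\|x_i-x^*\|^2 = \sum_{i=0}^k S_i$, and the induction hypothesis gives $\sum_{i=0}^k S_i \le C_0\sum_{j\ge 1} j^{-2} = C_0\pi^2/6$; this term is therefore $\le \tfrac{\pi^2}{18}\,C_0/(k+2)^2$. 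Adding the two contributions yields $S_{k+1}\le\bigl(\tfrac{1}{3}+\tfrac{\pi^2}{18}\bigr)C_0/(k+2)^2 = \tfrac{6+\pi^2}{18}\,C_0/(k+2)^2$, and since $\tfrac{6+\pi^2}{18}<1$ the induction closes.

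The main obstacle is the self-referential character of the summation term: the estimate of $\|x_{k+1}-x^*\|^2$ involves the weighted distances of all earlier iterates, so the bound on $S_{k+1}$ feeds back on the partial sum of all previous $S_i$. A plain induction is insufficient; one needs strong induction together with the summability of $\sum j^{-2}$ to the finite constant $\pi^2/6$. The constant $3$ in the assumption $C\le\mu/(3C_1)$ is calibrated exactly so that $\tfrac{1}{3}\bigl(1+\tfrac{\pi^2}{6}\bigr)<1$, which is precisely what turns the recursion into a contraction and makes the $1/(k+1)^2$ decay self-propagating.
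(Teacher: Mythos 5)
Your proof is correct and follows essentially the same route as the paper's: strong induction, substitution of Lemma~\ref{lem:bound_sec_seq}, the bound $a_{k+1}/A_k \le C_1$ from Lemma~\ref{lem:bound-ak}, and the calibration $C \le \mu/(3C_1)$ to close the recursion. The only cosmetic differences are that you multiply by $a_{k+1}(\mu_{k+1}-\mu)$ before splitting the two terms (the paper first bounds $\|x_{k+1}-x^*\|^2 \le 3C_0/(A_k\mu)$ and then multiplies) and that you use $\sum_{j\ge 1} j^{-2} = \pi^2/6$ where the paper uses the cruder bound $2$, which is exactly what makes the factor equal to $1$ rather than $(6+\pi^2)/18$.
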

\begin{proof}
The proof of this statement can be found in Appendix~\ref{ap:summable-error}.
\end{proof}\\
Now we can prove our main result on robustness of the fast gradient method using upper estimates of the strong convexity parameter.
\begin{proposition}\label{prop:APG-bound}
Given a non increasing sequence $(\mu_i)$ satisfying 
\BEQ
0 \leq \mu_k - \mu \leq \frac{C}{(k+1)^2} \;,\; \forall k \geq 1
\EEQ
with $C \leq \frac{\mu(1-\sqrt{\frac{\mu_0}{L}})}{3\sqrt{\frac{\mu_0}{L}}}$, the output of Algorithm~\ref{algo:FGM-const-adapt} satisfies
\BEQ
f(y_k) - f^* \leq \frac{5C_0}{2A_k} , \forall k \geq 0
\EEQ
where $C_0 = \max((\mu_0 - \mu)\|x_0-x^*\|^2,2(f(x_0)-f^*) + \mu\|x_0-x^*\|^2)$ and
\BEQ
A_k \geq \left(1-\sqrt{\frac{\mu}{L}}\right)^{-k}\; , \; \forall k \geq 1
\EEQ
\end{proposition}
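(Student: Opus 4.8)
The plan is to feed the preliminary gap bound of Proposition~\ref{prop:APG-estim-bound} into the per-iterate estimate of Lemma~\ref{lem:summable-error}, and then collapse the accumulated error into a convergent series. First I would check that the hypothesis on $C$ in the statement, namely $C \leq \frac{\mu(1-\sqrt{\mu_0/L})}{3\sqrt{\mu_0/L}}$, is exactly the condition $C \leq \frac{\mu}{3C_1}$ required by Lemma~\ref{lem:summable-error}, since $C_1 = \frac{\sqrt{\mu_0/L}}{1-\sqrt{\mu_0/L}}$ by Lemma~\ref{lem:bound-ak}. Hence Lemma~\ref{lem:summable-error} applies and yields $a_i(\mu_i-\mu)\|x_i-x^*\|^2 \leq \frac{C_0}{(i+1)^2}$ for every $i\geq 0$; note that because $a_0=1$, the constant $C_0$ there coincides with the one in the present statement.

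Next, starting from the bound of Proposition~\ref{prop:APG-estim-bound} I would treat its two contributions separately. For the leading term, using $a_0=1$ and the definition of $C_0$, one has $f(x_0)-f^* + \frac{\mu}{2}\|x_0-x^*\|^2 = \frac12\big(2(f(x_0)-f^*) + \mu\|x_0-x^*\|^2\big) \leq \frac{C_0}{2}$, so this term is at most $\frac{C_0}{2A_k}$. For the error term $\frac{1}{2A_k}\sum_{i=0}^k a_i(\mu_i-\mu)\|x_i-x^*\|^2$, I would substitute the per-iterate bound above to obtain $\frac{C_0}{2A_k}\sum_{i=0}^k \frac{1}{(i+1)^2}$ and bound the partial sum by the full $p$-series $\sum_{j\geq 1} j^{-2} = \pi^2/6 < 2$. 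Combining the two contributions gives $f(y_k)-f^* \leq \frac{C_0}{2A_k} + \frac{C_0}{A_k} = \frac{3C_0}{2A_k} \leq \frac{5C_0}{2A_k}$, the announced constant $5/2$ leaving comfortable slack over what the $p$-series bound actually permits.

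The growth estimate on $A_k$ follows from monotonicity alone: writing $A_k = \prod_{i=1}^k (1-\sqrt{\mu_i/L})^{-1}$ from Proposition~\ref{prop:APG-estim-bound} and using $\mu_i \geq \mu$, each factor satisfies $(1-\sqrt{\mu_i/L})^{-1} \geq (1-\sqrt{\mu/L})^{-1}$, so the product dominates $(1-\sqrt{\mu/L})^{-k}$ for $k\geq 1$.

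Most of the real difficulty has already been absorbed into Lemma~\ref{lem:summable-error}, whose inductive proof chains together Lemma~\ref{lem:bound_sec_seq} and Lemma~\ref{lem:bound-ak}; at the level of this proposition the only genuine care required is matching the constant $C_0$ across the two statements through the $a_0=1$ normalization, and confirming that the geometric denominator $A_k$ is not itself degraded by running the scheme with $\mu_i$ in place of $\mu$ — a point the monotonicity argument settles cleanly.
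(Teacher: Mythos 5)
Your proof is correct and follows exactly the route the paper takes: its entire proof of this proposition is the one-line instruction to combine Proposition~\ref{prop:APG-estim-bound} with Lemma~\ref{lem:summable-error} and to note that $\mu_i \geq \mu$ forces $A_k \geq (1-\sqrt{\mu/L})^{-k}$, which is precisely the combination you carry out in detail. Your explicit bookkeeping in fact yields the tighter constant $\tfrac{3C_0}{2A_k}$, comfortably within the stated $\tfrac{5C_0}{2A_k}$, and you correctly identify that the paper's justification of the $A_k$ bound should read $\mu_i \geq \mu$ rather than $\mu_i \leq \mu$.
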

\begin{proof}

Combine Proposition~\ref{prop:APG-estim-bound} and Lemma~\ref{lem:summable-error}. The bound on $A_k$ is true because $\mu_i \leq \mu$. 
\end{proof}\\
This results can be extended in the case where the $\mu_i$ converge at a summable rate to $\mu$. Note also that the constant $C_0$ is bounded by $C_0 = \max((\frac{L}{2} - \mu)\|x_0-x^*\|^2,2(f(x_0)-f^*) + \mu\|x_0-x^*\|^2)$ since the $\mu_0$ will never be taken larger than $\frac{L}{2}$ in our case of interest.

\section{Estimation of Strong Convexity Parameter} 
In this section we propose an estimate of the strong convexity parameter, that can be computed online with the iterations of the algorithm. We do not prove the convergence of our estimate in the general case but we present hints that support its performance. The optimum function value $f^*$ is required to compute these estimates, as for Polyak steps. We set $\mu_0$ to a rough upper bound on $\mu$, for instance $\frac{L}{2}$ is suitable for problems that need to be solved with accelerated methods. Then $\mu_{k+1}$ for $k \geq 0$ is defined as follows
\BEQ\label{eq:mu}
\left .
    \begin{array}{l}
        \hat{\mu}_{k+1} = \frac{\|g_L(y_k)\|^2}{2(f(y_k)-f^*)}\\
        \mu_{k+1} = \underset{i = 0..k+1}{\min\;}\hat{\mu}_i
    \end{array}
\right \} \forall k\geq 0
\EEQ
In the following we keep our study in the case $\psi(x) =0$ and $\hat{\mu}$ becomes
\BEQ
\hat{\mu}_{k+1} = \frac{\|\nabla h(y_k)\|^2}{2(h(y_k) - h^*)},\forall k \geq 0
\EEQ 
Lemma~\ref{lem:nest18} in the Appendix ensures that the $\mu_k$ are lower bounded by the strong convexity $\mu$. The following lemma shows that $\mu_k$ is effectively converging to $\mu$ when the $y_k$ are iterates of a gradient descent on $h$, a strongly convex quadratic. 
\begin{lemma}
Let $h^* \in \reals, x^* \in \reals^n$, $A \in S_n^{++}(\reals)$, and suppose $h(x) = h^* + \frac{1}{2}(x-x^*)^TA(x-x^*)$. Let $y_k$ be the iterates of a gradient descent procedure starting at $y_0$ with constant step $\frac{1}{L}$ where $L$ is the largest eigenvalue of $A$. We get
\BEQ
\frac{\|\nabla h(y_k)\|^2}{2(h(y_k) - h^*)} - \mu \leq \dfrac{\|y_0 - x^*\|^2}{\omega_1^2}(\lambda_2 - \mu)\dfrac{\lambda_2}{\mu}\left( \dfrac{1-\frac{\lambda_2}{L}}{1-\frac{\mu}{L}}\right)^{2k}
\EEQ
where $\mu$ is the smallest eigenvalue of $A$, $\lambda_2$ the second smallest and $\omega_1$ the component of $y_0-x^*$ on the eigenspace associated with $\mu$.
\end{lemma}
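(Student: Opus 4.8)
The plan is to reduce everything to a spectral computation by diagonalizing $A$. Write $A = \sum_{i=1}^n \lambda_i u_i u_i^\top$ with orthonormal eigenvectors $u_i$ and eigenvalues $\mu = \lambda_1 \le \lambda_2 \le \dots \le \lambda_n = L$, and expand the initial error in this basis, $y_0 - x^* = \sum_i \omega_i u_i$, so that $\|y_0-x^*\|^2 = \sum_i \omega_i^2$ and $\omega_1$ is exactly the component along the $\mu$-eigenspace appearing in the statement. Since $\nabla h(y) = A(y-x^*)$, the gradient step $y_{k+1} = y_k - \tfrac1L\nabla h(y_k)$ gives $y_k - x^* = (I-\tfrac1L A)^k(y_0-x^*)$, which is diagonal in the $u_i$ basis with $i$-th coordinate $(1-\lambda_i/L)^k\omega_i$.

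Next I would plug this into the two quantities of interest. Using $\nabla h(y_k)=A(y_k-x^*)$ and $h(y_k)-h^* = \tfrac12(y_k-x^*)^\top A(y_k-x^*)$, I get closed forms $\|\nabla h(y_k)\|^2 = \sum_i \lambda_i^2(1-\lambda_i/L)^{2k}\omega_i^2$ and $2(h(y_k)-h^*) = \sum_i \lambda_i(1-\lambda_i/L)^{2k}\omega_i^2$. Their ratio is a weighted average of the $\lambda_i$ with nonnegative weights, which immediately re-proves that it is at least $\mu$; subtracting $\mu$ and cancelling the (vanishing) $i=1$ contribution in the numerator gives
\[
\frac{\|\nabla h(y_k)\|^2}{2(h(y_k)-h^*)} - \mu = \frac{\sum_{i\ge 2}(\lambda_i-\mu)\lambda_i(1-\lambda_i/L)^{2k}\omega_i^2}{\sum_{i\ge1}\lambda_i(1-\lambda_i/L)^{2k}\omega_i^2}.
\]

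From here the strategy is to bound numerator and denominator separately. For the denominator I keep only the $i=1$ term, $\sum_i \lambda_i(1-\lambda_i/L)^{2k}\omega_i^2 \ge \mu(1-\mu/L)^{2k}\omega_1^2$, which produces the factors $1/\mu$, $1/\omega_1^2$ and $(1-\mu/L)^{-2k}$ in the claimed right-hand side. For the numerator I want to factor out the $i=2$ contribution, i.e. to show that each summand is dominated by the $\lambda_2$ one: $(\lambda_i-\mu)\lambda_i(1-\lambda_i/L)^{2k} \le (\lambda_2-\mu)\lambda_2(1-\lambda_2/L)^{2k}$ for every $i\ge2$, after which $\sum_{i\ge2}(\cdot)\,\omega_i^2 \le (\lambda_2-\mu)\lambda_2(1-\lambda_2/L)^{2k}\sum_i\omega_i^2$, and the two bounds multiply to exactly the stated expression.

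The main obstacle is precisely this last monotonicity step. It amounts to showing that $g(\lambda) = (\lambda-\mu)\lambda(1-\lambda/L)^{2k}$ is non-increasing on $[\lambda_2,L]$, so that its maximum over the part of the spectrum above $\mu$ is attained at $\lambda_2$. Differentiating, the sign of $g'$ is that of $(2\lambda-\mu)(1-\lambda/L) - \tfrac{2k}{L}\lambda(\lambda-\mu)$, so the exponential decay $(1-\lambda/L)^{2k}$ must overpower the quadratic growth $(\lambda-\mu)\lambda$; this holds once $k$ is large enough that this bracket is nonpositive throughout $[\lambda_2,L]$, equivalently once the unconstrained maximizer of $g$ has been pushed below $\lambda_2$ (which happens because that maximizer tends to $\mu$ as $k\to\infty$). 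I expect this to be the delicate part of the argument — pinning down the regime in which $\lambda_2$ dominates — whereas the diagonalization and the denominator bound are routine.
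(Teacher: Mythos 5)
Your diagonalization, the closed forms $\|\nabla h(y_k)\|^2=\sum_i\lambda_i^2(1-\lambda_i/L)^{2k}\omega_i^2$ and $2(h(y_k)-h^*)=\sum_i\lambda_i(1-\lambda_i/L)^{2k}\omega_i^2$, and the lower bound on the denominator by its $i=1$ term are exactly the computation the paper intends (its entire proof is the single line ``decompose the iterates on the eigenvectors of $A$''), and they do reproduce the constant $\frac{\|y_0-x^*\|^2}{\omega_1^2}(\lambda_2-\mu)\frac{\lambda_2}{\mu}$ and the rate $\bigl(\frac{1-\lambda_2/L}{1-\mu/L}\bigr)^{2k}$ in the statement. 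So the reduction is right and matches the paper's route.

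The gap is the step you yourself flag as delicate, and it is not merely delicate: the domination $(\lambda_i-\mu)\lambda_i(1-\lambda_i/L)^{2k}\le(\lambda_2-\mu)\lambda_2(1-\lambda_2/L)^{2k}$ for all $i\ge 2$ is \emph{false} for small $k$ whenever the spectrum contains an eigenvalue strictly between $\lambda_2$ and $L$ with $\lambda_j(\lambda_j-\mu)\gg\lambda_2(\lambda_2-\mu)$. For instance with eigenvalues $\{1,\,1.01,\,50,\,100\}$ and $\omega_i=1$ for all $i$, at $k=1$ the $\lambda_j=50$ term contributes $50\cdot 49\cdot(1/2)^2\approx 612$ to the numerator while the claimed bound is about $0.04$; the inequality of the lemma itself fails there (and for all $k\lesssim 8$ in this example). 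The required threshold on $k$ depends on the whole interior spectrum, not only on $\mu$, $\lambda_2$, $L$ and $\omega_1$, so no choice of the constants appearing in the statement can repair it uniformly in $k$. The correct conclusion of your argument is therefore an asymptotic one: the exact identity you derived shows that $\frac{\|\nabla h(y_k)\|^2}{2(h(y_k)-h^*)}-\mu$ decays at the rate $\bigl(\frac{1-\lambda_2/L}{1-\mu/L}\bigr)^{2k}$ with the stated leading constant once $k$ exceeds a spectrum-dependent threshold (or immediately if $\omega_i=0$ for the eigenvalues above $\lambda_2$). You should either add such a ``for $k$ large enough'' qualifier, or replace $\lambda_2$ in the constant (though not in the rate) by $\max_{j\ge2}\lambda_j(\lambda_j-\mu)/(\lambda_2-\mu)$-type quantities; as written, the final monotonicity claim cannot be established and the lemma should not be asserted for all $k\ge 0$.
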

\begin{proof}
Decompose the iterates on the eigenvectors of $A$. 
\end{proof}\\
The same kind of convergence with an accelerated rate can be obtain when the $y_{k}$ are the iterates of an APG with a constant momentum $\beta \leq \frac{1-\sqrt{\kappa}}{1+\sqrt{\kappa}}$ on a strongly convex quadratic. The key in these two examples is that the component of $y_k$ associated with the smallest eigenvalue of the hessian of $f$ has the slowest convergence rate. This is the conjugate effect of a gradient step that decreases first the components associated with the highest eigenvalues and of a small extrapolation step that preserves the order of convergence between the different components.   
\section{Numerical Experiments} In this section we present numerical experiments on Algorithm~\ref{algo:FGM-const-adapt}. We also show results of Algorithm~\ref{algo:APGv2}, a very simple modification of APG for which we did not prove robustness but that appears to work very well in practice.
\begin{algorithm}[h]
\caption{APG adapt v2}
\label{algo:APGv2}
\begin{algorithmic}
\STATE \algorithmicrequire\;$x_0 \in \reals^n$, $L$, $f^*$
\STATE $y_{-1} = y_0 = x_0$.
\FOR{$k \geq 0$}
\STATE $\hat{\mu}_k = \frac{\|g_L(y_k)\|^2}{2(f(y_k) - f^*)}$
\STATE $\mu_k = \underset{i = 0..k}{\min}\hat{\mu}_i$
\STATE $\beta_k = \frac{1-\sqrt{\frac{\mu_k}{L}}}{1+\sqrt{\frac{\mu_k}{L}}}$
\STATE $x_{k+1} = y_{k} + \beta_k(y_k - y_{k-1})$
\STATE $y_{k+1} = T_L(x_{k+1})$
\ENDFOR
\STATE \algorithmicensure\; $y_{k+1}$.
\end{algorithmic}
\end{algorithm}

Both Algorithms~\ref{algo:FGM-const-adapt} and \ref{algo:APGv2} compute and use the strong convexity estimates defined in \eqref{eq:mu} during their execution. In order to get the values of $f^*$ in the experiments we run APG for a sufficient amount of time to reach machine precision. We compare our two algorithms (APG adapt) and (APG adapt v2) with Proximal Gradient Descent (PGD), Accelerated Proximal Gradient for smooth functions (APG), Accelerated Proximal Gradient with known strong convexity parameter (APG Optiamal $\mu$) (for square loss and regularized logistic loss) and restarted Accelerated Proxmial Gradient using $f^*$ in a stopping criterion with decay parameter $\gamma$ (APG Restart $\gamma=\cdot$) tuned to give the best result. The restart scheme is described in Appendix~\ref{ap:numeric}. Even though the theoretical complexity bound is optimal, the $\gamma$ tuning step for the restart strategy still has a significant impact on empirical performance, as shown in Figure~\ref{fig:restart-gamma} in the Appendix. In terms of computational cost, our algorithms require one more call to the gradient oracle per iteration than the restarted algorithm but there is no parameter to tune, indeed $\mu_0$ is always chosen as $\frac{L}{2}$ and has no impact in practice.
\begin{figure}[!h]
    \centering
    \includegraphics[scale=0.45]{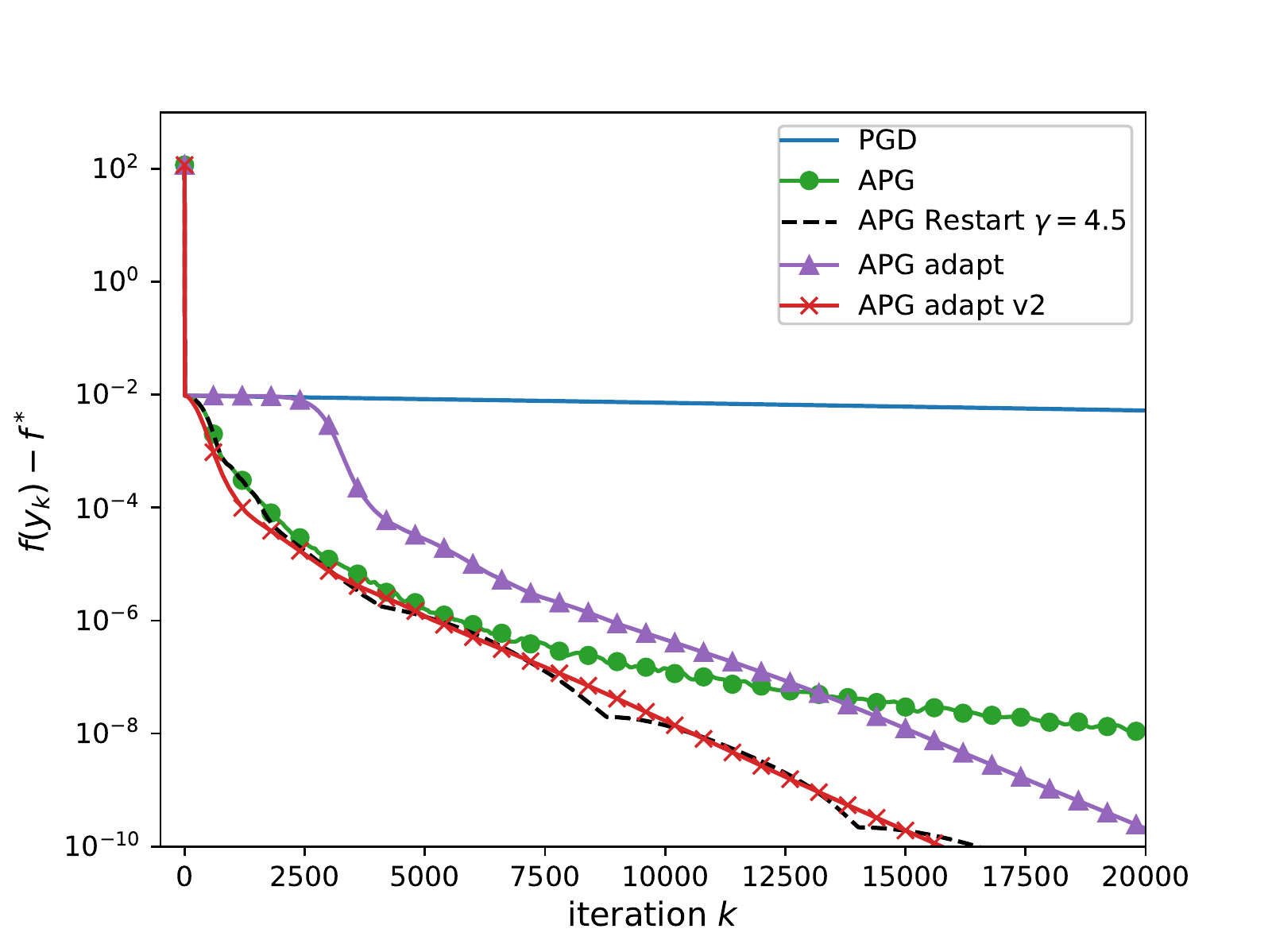}    \caption{Experiments on matrix completion. $f(X) = \sum_{i,j\in\Omega}\|X_{ij}-Y_{ij}\|^2 + \lambda\|X\|_*$ where $Y$ is a random observation matrix in $\reals^{30\times30}$ of rank $5$, $\Omega$ is a subset of $[|1,30|]^2$ of size $200$, $\lambda =0.01$ and $\|\cdot\|_*$ is the nuclear norm.}
    \label{fig:mtx-comp}
\end{figure}

Figure~\ref{fig:mtx-comp} shows the convergence of the primal gap when solving the matrix completion problem on synthetic data using the nuclear norm penalization formulation. Our adaptive algorithms exhibit linear convergence meaning that they successfully estimate the local strong convexity of the problem.\\
Figure~\ref{fig:all} regroups the results of experiments on two real world datasets of different sizes using 4 different classical losses. In all cases, our algorithms perform well and display the fast converging rate. Figure~\ref{fig:sonar} in Appendix~\ref{ap:numeric} shows additional experiments and Figure~\ref{fig:gap} the convergence of our online estimate of the strong convexity parameter during the execution of the algorithm.
\begin{figure}[h]
\def \plotscale {0.43}
\centering
    \begin{tabular}{cc}
    \includegraphics[scale=\plotscale]{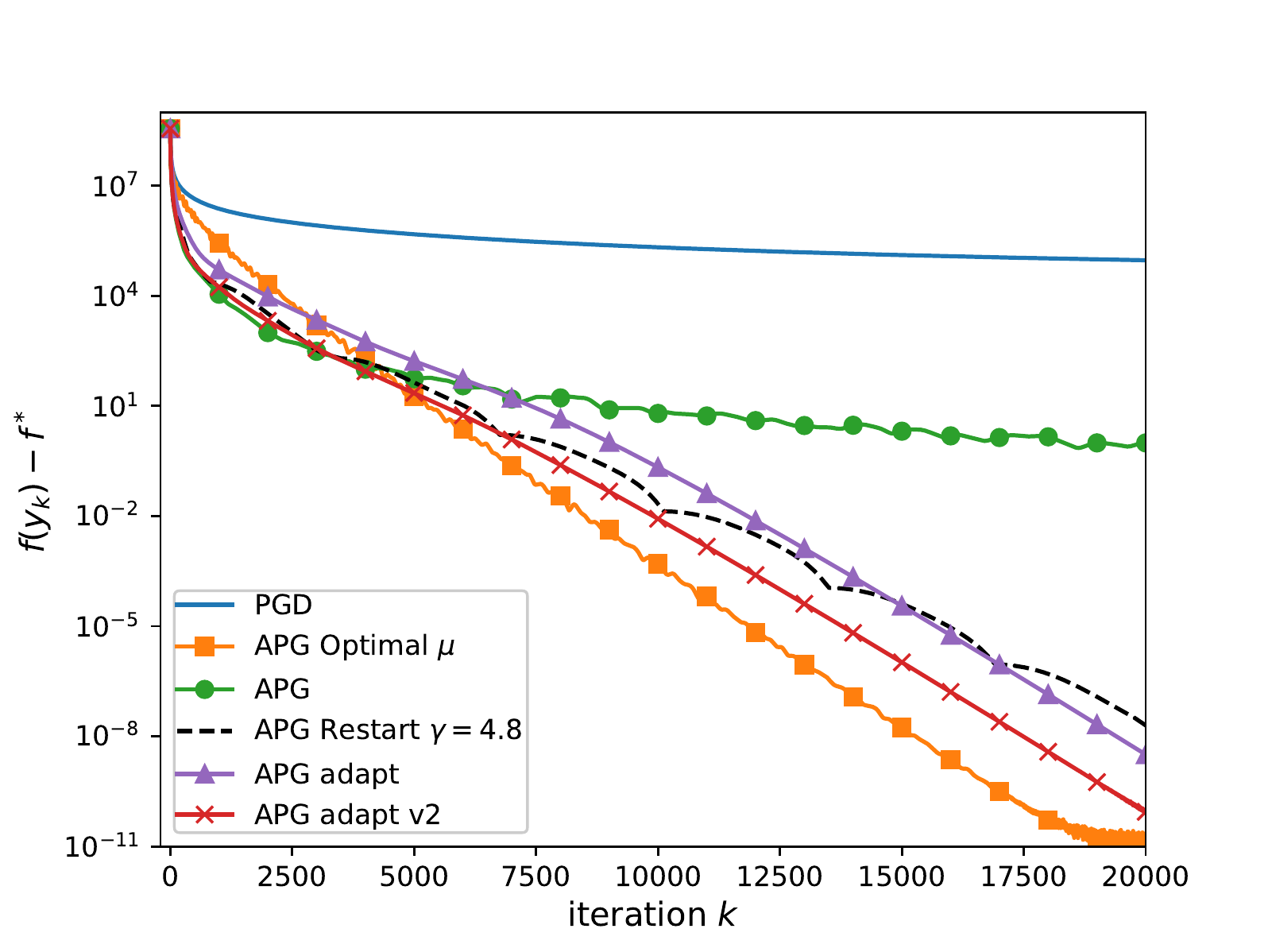}&\includegraphics[scale=\plotscale]{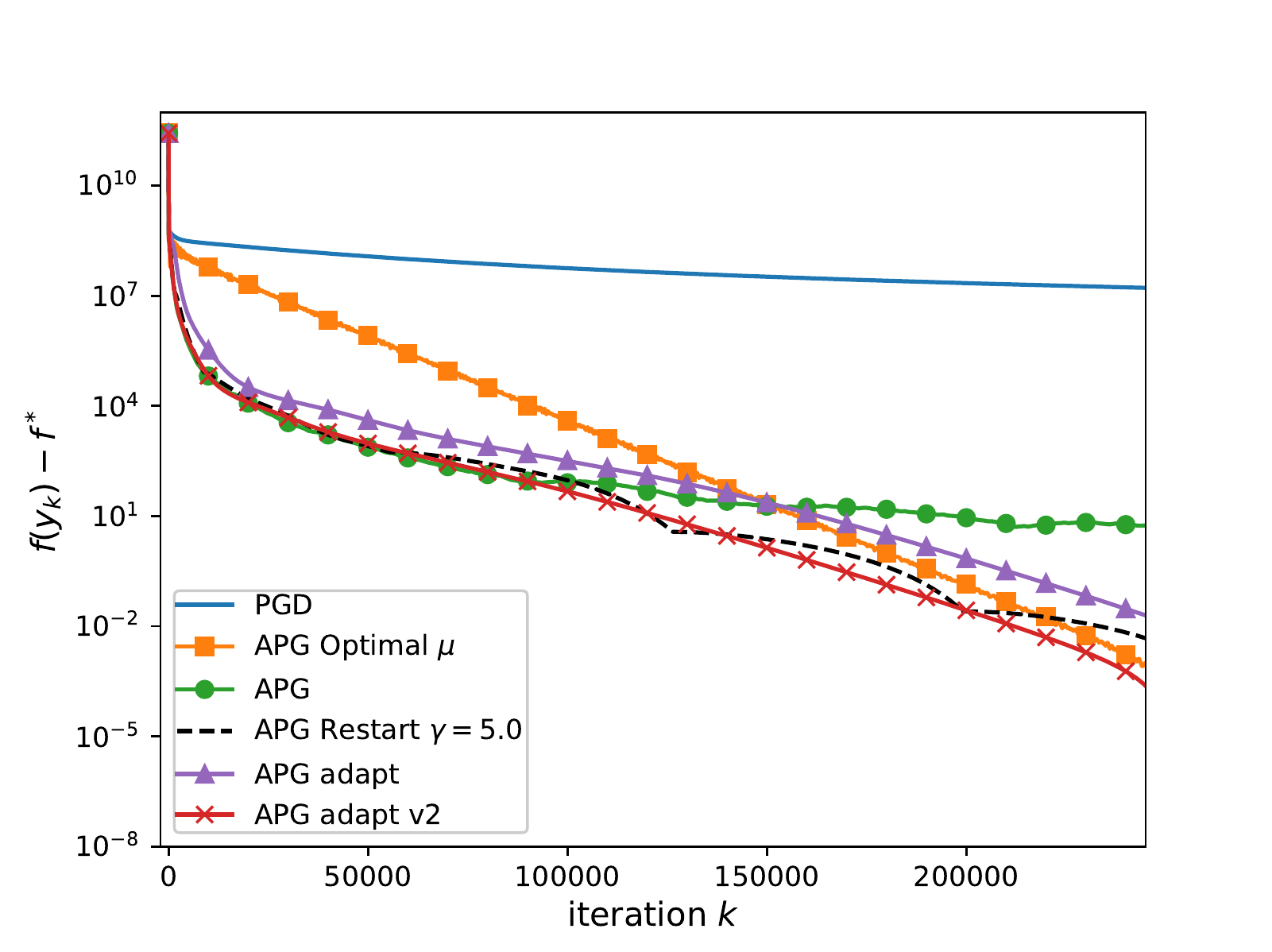}\\    \includegraphics[scale=\plotscale]{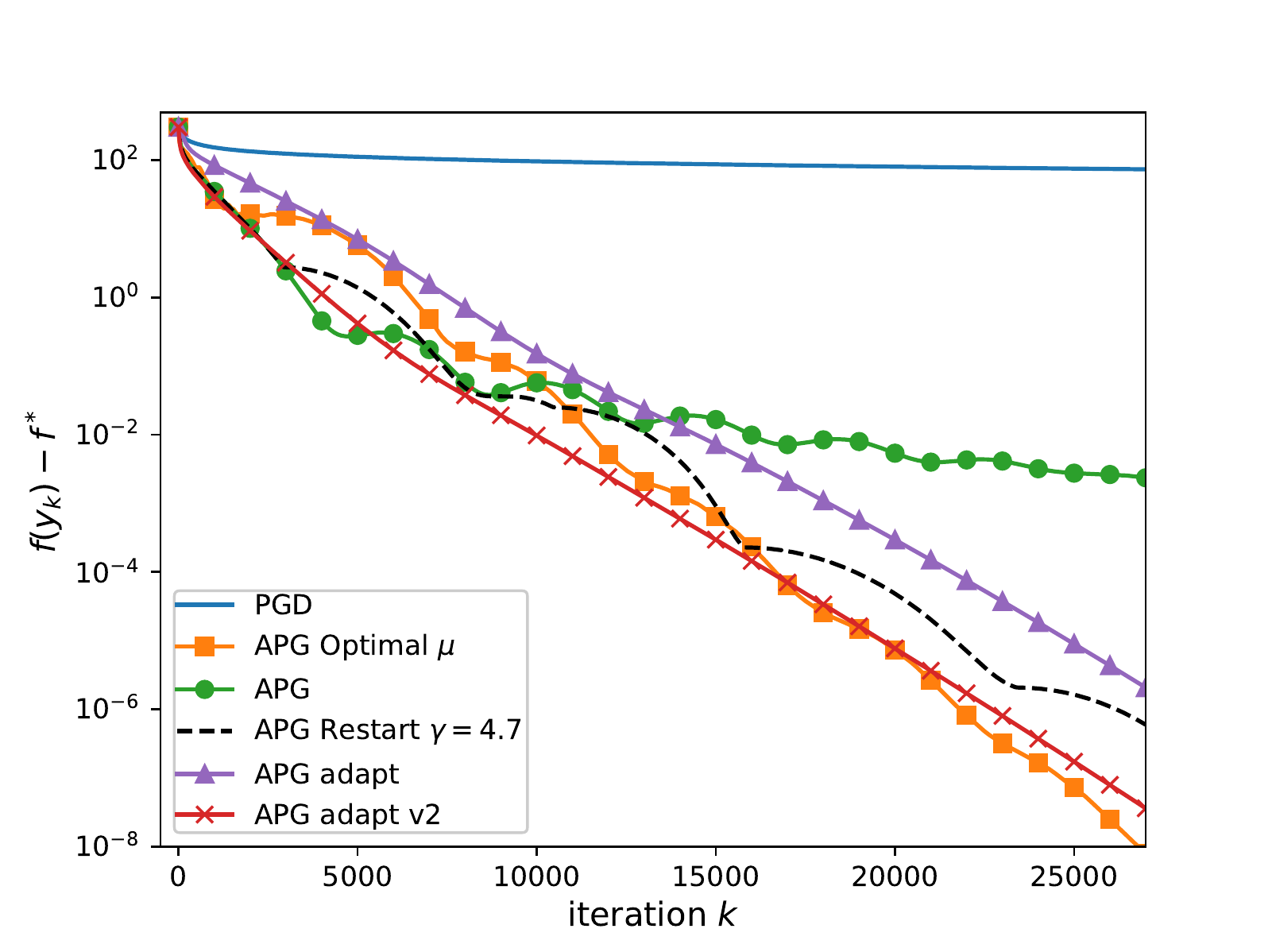}&\includegraphics[scale=\plotscale]{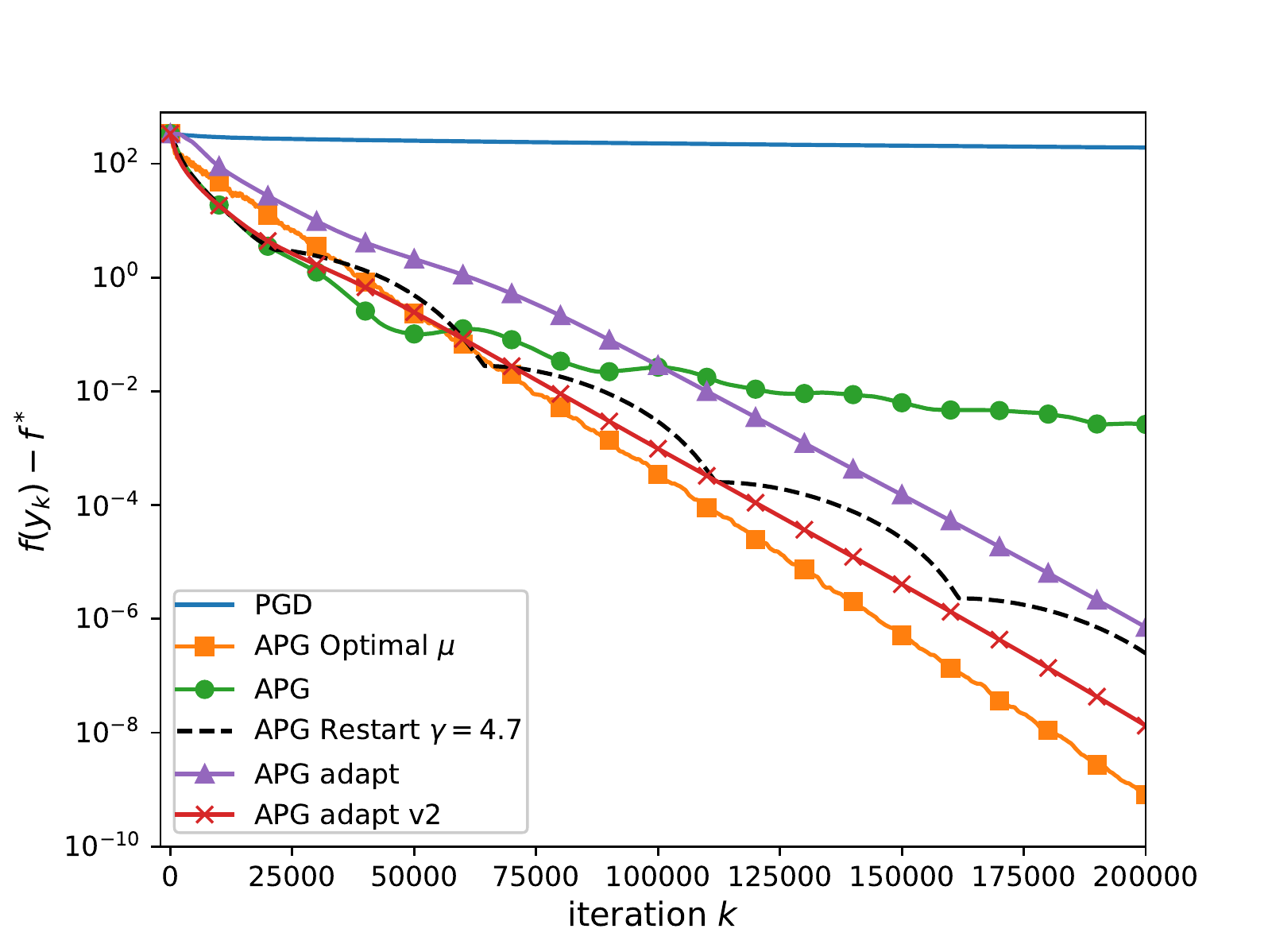}\\
    \includegraphics[scale=\plotscale]{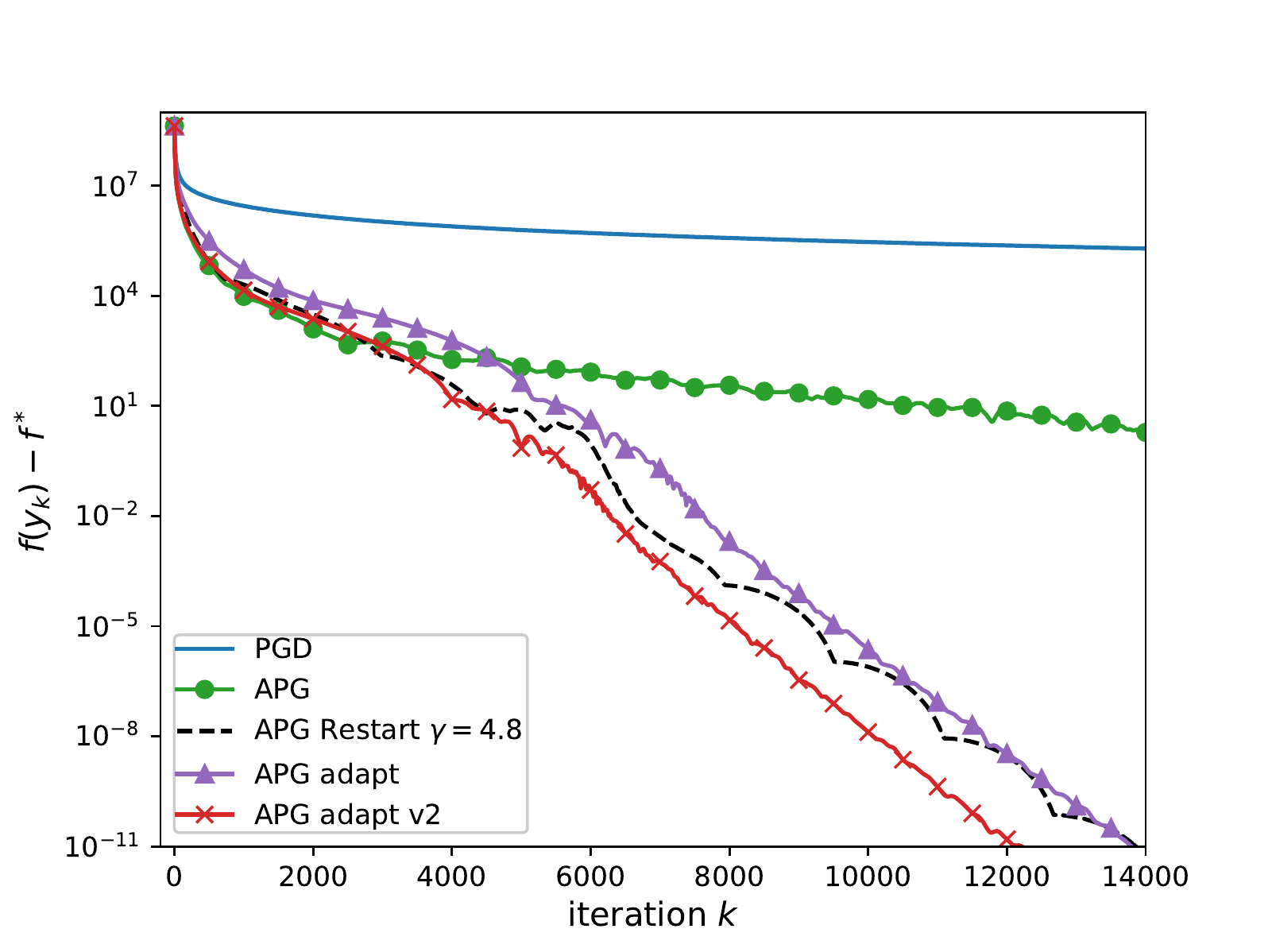}&\includegraphics[scale=\plotscale]{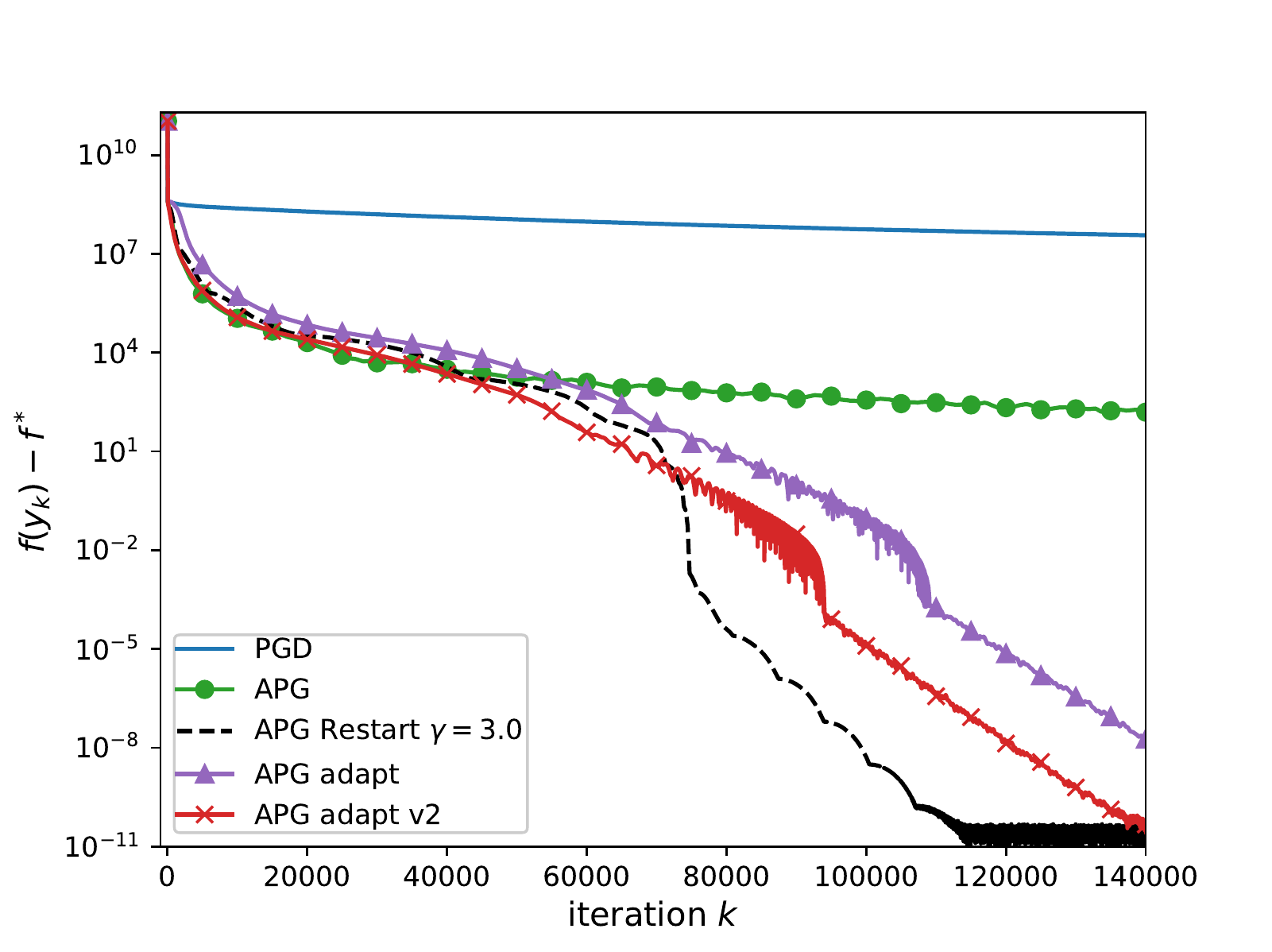}\\
    \includegraphics[scale=\plotscale]{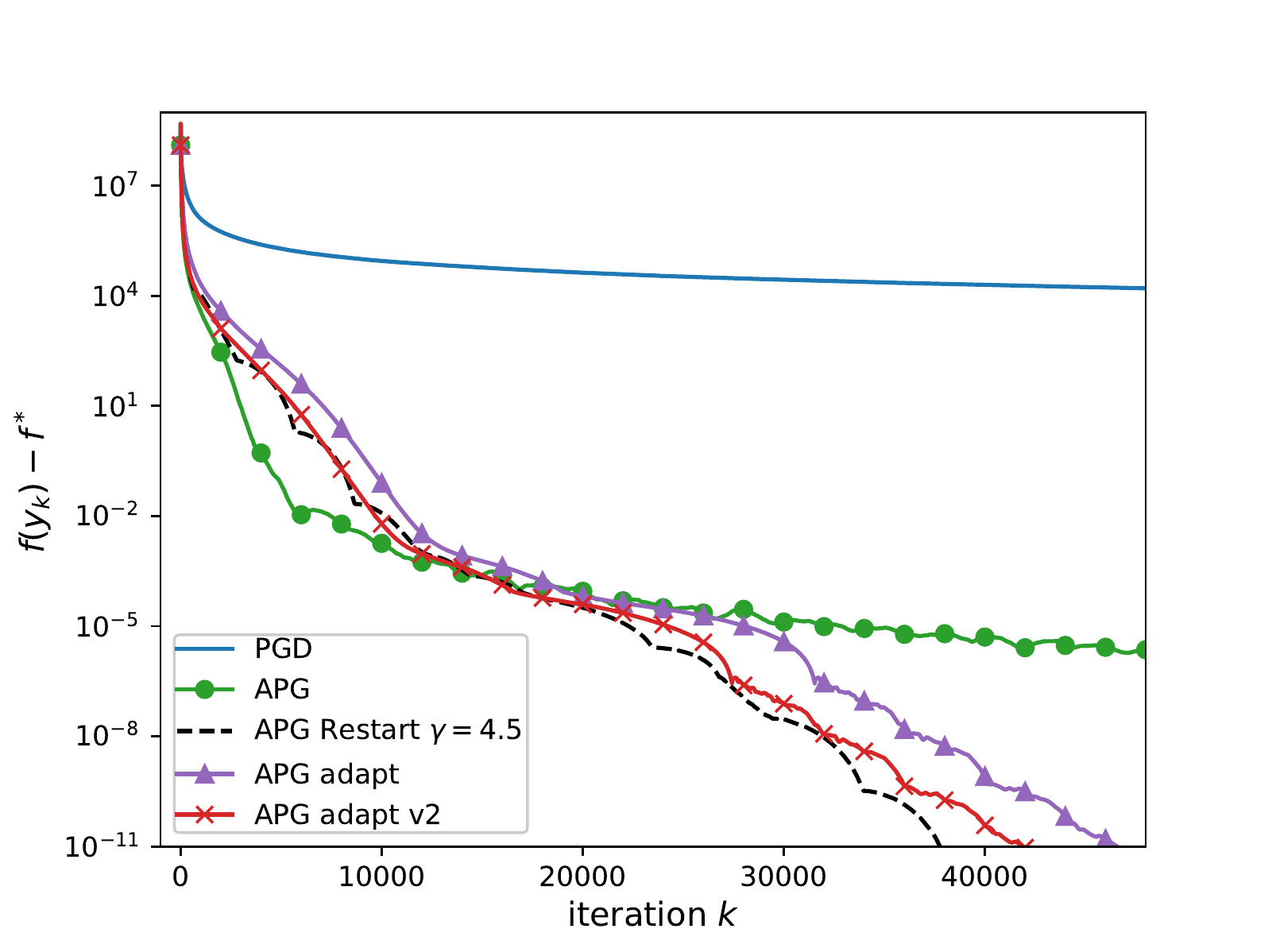}&\includegraphics[scale=\plotscale]{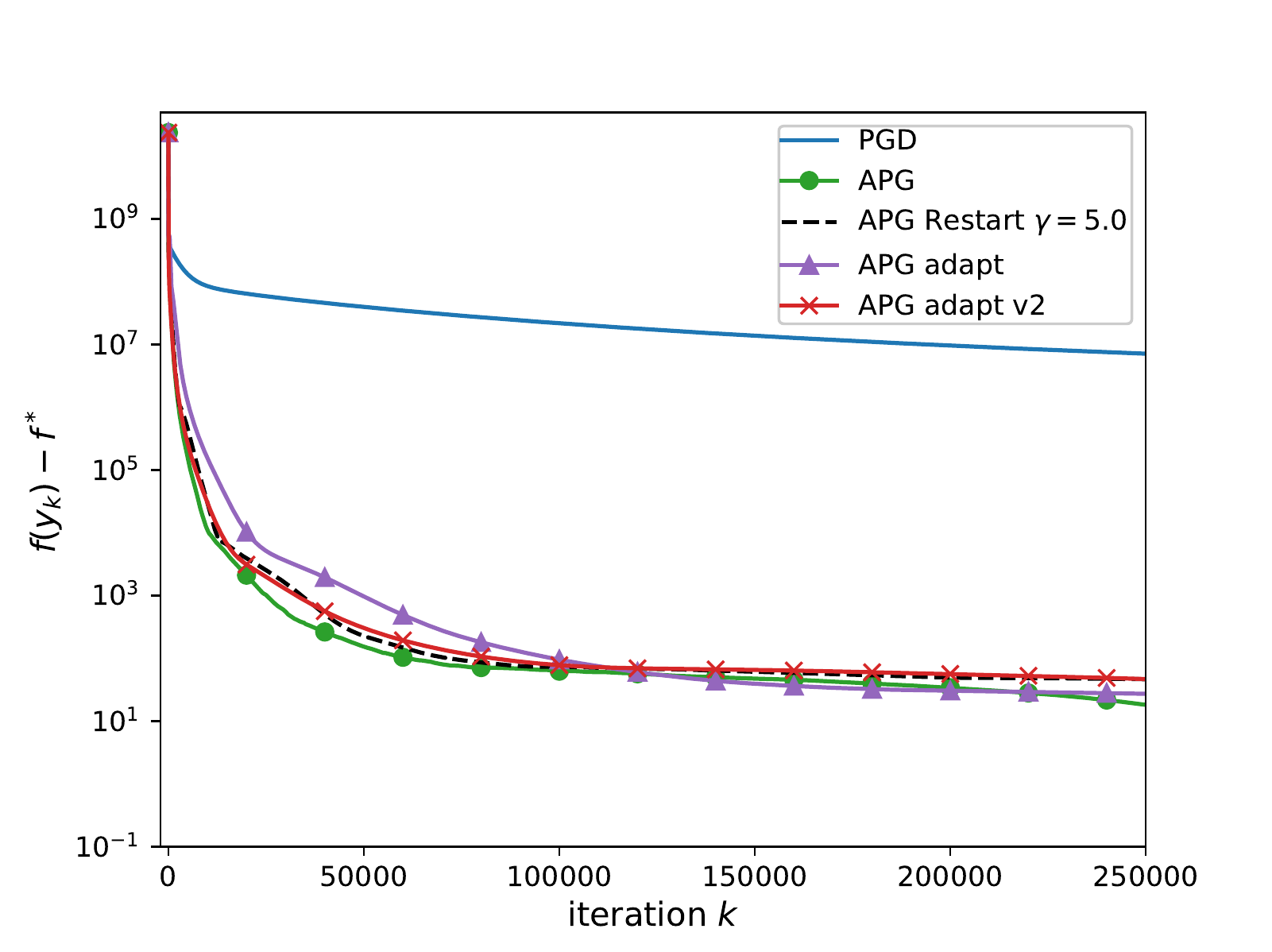}
    \end{tabular}
    \caption{Primal gap versus number of iterations. Each column corresponds to a dataset, Musk (left) and Madelon (right). Each row corresponds to a particular loss, from top to bottom: least square loss, regularized logistic loss, LASSO and dual of regularized SVM. Parameters used in the loss associated with each curve are given in Table~\ref{tab:param} in the Appendix. }
    \label{fig:all}
\end{figure}

\clearpage
\bibliographystyle{plainnat}
\bibliography{MainPerso,mybiblio}

\clearpage
\appendix
{\Large \textbf{Appendix} }
\section{Usefull Lemmas}
\begin{lemma}\label{lem:nest18}
Since $h$ is $L$-smooth and $\mu$-strongly convex, the following bounds hold
\BEQ
h(x) \leq h(y) + \nabla h(y)^T(x-y) +\frac{L}{2}\|x-y\|^2 
\nonumber\EEQ
\BEQ
h(x) \geq h(y) + \nabla h(y)^T(x-y) +\frac{\mu}{2}\|x-y\|^2 
\nonumber\EEQ
$\forall x,y \in \reals^n$.
\end{lemma}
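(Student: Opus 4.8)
The plan is to prove the two inequalities separately, the upper one from $L$-smoothness and the lower one from $\mu$-strong convexity, using only elementary calculus. Both are classical (see e.g. \citep{Nest18}), so each argument is short; the work is mainly in fixing the right characterizations of the two regularity properties.

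For the upper bound I would start from the working definition of $L$-smoothness, namely that $\nabla h$ is $L$-Lipschitz, $\|\nabla h(u) - \nabla h(v)\| \leq L\|u-v\|$ for all $u,v \in \reals^n$. Writing the difference along the segment joining $y$ to $x$,
\BEQ
h(x) - h(y) - \nabla h(y)^T(x-y) = \int_0^1 \big(\nabla h(y+t(x-y)) - \nabla h(y)\big)^T(x-y)\,dt,
\EEQ
I would bound the integrand by Cauchy--Schwarz together with the Lipschitz estimate $\|\nabla h(y+t(x-y))-\nabla h(y)\| \leq Lt\|x-y\|$, and then integrate $\int_0^1 Lt\|x-y\|^2\,dt = \frac{L}{2}\|x-y\|^2$, which gives the first inequality.

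For the lower bound I would invoke $\mu$-strong convexity in the form that $g(x) := h(x) - \frac{\mu}{2}\|x\|^2$ is convex. The first-order characterization of convexity of $g$ reads $g(x) \geq g(y) + \nabla g(y)^T(x-y)$; substituting $\nabla g(y) = \nabla h(y) - \mu y$ and simplifying $\frac{\mu}{2}\|x\|^2 - \frac{\mu}{2}\|y\|^2 - \mu y^T(x-y) = \frac{\mu}{2}\|x-y\|^2$ rearranges exactly to the claimed lower bound. Alternatively, if $h$ is assumed twice differentiable, both inequalities follow simultaneously from a second-order Taylor expansion with integral remainder combined with the spectral bounds $\mu I \preceq \nabla^2 h \preceq L I$.

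The only genuine subtlety is which precise definition of smoothness and strong convexity to adopt as the starting point, since the paper states these properties qualitatively; once the Lipschitz-gradient and convex-shift characterizations are fixed, there is no real obstacle and the remaining computation is routine.
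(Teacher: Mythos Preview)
Your proposal is correct and fills in the details that the paper omits: the paper's own proof is simply a pointer to \citep[Th.~2.1.5, Th.~2.1.10]{Nest18}, with no argument written out. Your integral-remainder derivation for the smoothness upper bound and convex-shift derivation for the strong convexity lower bound are exactly the standard proofs behind those theorems, so there is no substantive difference in approach.
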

\begin{proof}
\citep[Th 2.1.5, Th 2.1.10]{Nest18}    
\end{proof}
\begin{lemma}\label{ap:eq1-2}
The sequence $(x_i)_{i\in\Integer}$ follows the same updates in Algorithm~\ref{algo:APG} and \ref{algo:FGM-const-gen}.
\end{lemma}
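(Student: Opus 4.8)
## Proof Proposal for Lemma~\ref{ap:eq1-2}

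The plan is to show that the update $x_{k+1} = \frac{\tau_k}{1+\tau_k}v_k + \frac{1}{1+\tau_k}y_k$ in Algorithm~\ref{algo:FGM-const-gen}, together with the specific choice $a_{k+1} = \frac{\sqrt{\kappa}}{1-\sqrt{\kappa}}A_k$, collapses to the simple momentum update $x_{k+1} = y_k + \beta(y_k - y_{k-1})$ of Algorithm~\ref{algo:APG} with $\beta = \frac{1-\sqrt{\kappa}}{1+\sqrt{\kappa}}$. The two sequences share the same $y_k = T_L(x_k)$ update, so it suffices to reconcile the $x_k$ recursions. The main task is therefore to eliminate the auxiliary sequence $v_k$ and re-express everything in terms of $y_k$ and $y_{k-1}$.

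First I would compute the constants explicitly. Since $a_{k+1}/A_k = \frac{\sqrt{\kappa}}{1-\sqrt{\kappa}}$ is \emph{constant} in $k$, the ratio $\tau_k = a_{k+1}/A_{k+1} = a_{k+1}/(A_k + a_{k+1})$ is also a fixed constant; a short calculation gives $\tau_k = \sqrt{\kappa}$ and hence $\frac{\tau_k}{1+\tau_k} = \frac{\sqrt{\kappa}}{1+\sqrt{\kappa}}$ and $\frac{1}{1+\tau_k} = \frac{1}{1+\sqrt{\kappa}}$. This already exposes the coefficient $\frac{1-\sqrt{\kappa}}{1+\sqrt{\kappa}} = \beta$ that should appear, which is reassuring.

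Next I would obtain a closed form for $v_k = \argmin_x m_k(x) + \frac{a_0\mu}{2}\|x-x_0\|^2$. Because each $m_k$ is a strongly convex quadratic (as noted after Corollary~\ref{cor:gap-strong}), its minimizer can be found by setting the gradient to zero. The key step is to derive a recursion for $v_k$ from the recursion $m_{k+1}(x) = m_k(x) + a_{k+1}(l_L(x,x_{k+1}) + \frac{\mu}{2}\|x-x_{k+1}\|^2)$: differentiating and using that $l_L$ is affine in $x$ with slope $g_L(x_{k+1})$, one gets a linear update for $v_{k+1}$ in terms of $v_k$, $x_{k+1}$, and $g_L(x_{k+1}) = L(x_{k+1} - y_{k+1})$. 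I expect this to yield a relation of the schematic form $v_{k+1} = (1-\tau_k)v_k + \tau_k x_{k+1} - \frac{a_{k+1}}{(\text{quadratic coefficient})} g_L(x_{k+1})$, which after substituting the constant $\tau_k = \sqrt{\kappa}$ and $g_L = L(x_{k+1}-y_{k+1})$ simplifies considerably.

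The main obstacle will be the bookkeeping in this $v_k$-elimination step: one must carefully track the accumulated quadratic coefficient of $m_k$ (which grows with the $a_i$) to correctly normalize the $g_L$ term, and then algebraically substitute the $v_k$-recursion into $x_{k+1} = \frac{\tau_k}{1+\tau_k}v_k + \frac{1}{1+\tau_k}y_k$ to verify it telescopes into $y_k + \beta(y_k - y_{k-1})$. I would close the argument by induction on $k$: assuming both algorithms agree through step $k$ (so they share $x_k$, $y_k$, $y_{k-1}$), substitute the $v_k$ expression and the constant coefficients into the estimate-sequence update and check that the $v_k$ terms recombine exactly into the momentum form, completing the inductive step. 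The base case $k=0$ follows immediately since $y_{-1}=y_0=x_0$ in both algorithms.
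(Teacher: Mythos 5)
Your plan is correct and follows essentially the same route as the paper's own proof: establish $\tau_k=\sqrt{\kappa}$ constant, derive the recursion $v_{k+1}=(1-\tau)v_k+\tau x_{k+1}-\frac{\tau}{\mu}g_L(x_{k+1})$ from the quadratic structure of the estimate sequence, substitute $g_L(x_{k+1})=L(x_{k+1}-y_{k+1})$ to get $v_{k+1}=\frac{\tau-1}{\tau}y_k+\frac{1}{\tau}y_{k+1}$, and reinject into the convex-combination update to recover $x_{k+2}=y_{k+1}+\frac{1-\tau}{1+\tau}(y_{k+1}-y_k)$. The normalization you flag as the main bookkeeping obstacle resolves exactly as you anticipate, since the accumulated quadratic coefficient of $\phi_{k+1}$ is $A_{k+1}\mu$, making the $g_L$ coefficient $\frac{a_{k+1}}{A_{k+1}\mu}=\frac{\tau}{\mu}$.
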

\begin{proof}
Note that $\tau_k = \tau = \sqrt{\kappa}$. Let $\phi_k(x) = m_k(x) + \frac{a_0\mu}{2}\|x-x_0\|^2$ , $\phi$ is a quadratic function. Since $v_{k+1}$ is the $\argmin$ of 
\begin{align*}
    \phi_{k+1}(x) &= \phi_k^* + \frac{A_k\mu}{2}\|x-v_k\|^2 + \frac{a_{k+1}\mu}{2}\|x-x_{k+1}\|^2  \\
    &+a_{k+1}\left(f(T_L(x_{k+1})) + g_L(x_{k+1})^T(x-x_{k+1}) +\frac{1}{2L}\|g_L(x_{k+1})\|^2\right)
\end{align*}
\begin{align*}
v_{k+1} &= (1-\tau)v_k +\tau x_{k+1} - \frac{\tau}{\mu}g_L(x_{k+1})\\
&= (1-\tau)v_k +\tau x_{k+1} - \frac{\tau}{\mu}L(x_{k+1}-y_{k+1})\\
&= (1-\tau)v_k +(\tau +\frac{1}{\tau})x_{k+1} + \frac{1}{\tau}y_{k+1}\\
&= (1-\tau)v_k +(\tau +\frac{1}{\tau})(\frac{\tau}{1+\tau}v_k + \frac{1}{1+\tau}y_k) + \frac{1}{\tau}y{k+1}\\
&=  \frac{\tau -1}{\tau}y_k + \frac{1}{\tau}y_{k+1}
\end{align*}
reinjecting in the expression of $x_{k+2}$,
\begin{align*}
    x_{k+2} &= \frac{\tau}{1+\tau}v_{k+1} + \frac{1}{1+\tau}y_{k+1}\\
    & = \frac{\tau}{1+\tau}(\frac{\tau -1}{\tau}y_k + \frac{1}{\tau}y_{k+1}) + \frac{1}{1+\tau}y_{k+1}\\
    &= y_{k+1} + (\frac{2}{1+\tau} -1) - \frac{1-\tau}{1+\tau}y_k\\
    &= y_{k+1} + \frac{1-\tau}{1+\tau}(y_{k+1}-y_k)
\end{align*}
which is the update of Algorithm~\ref{algo:APG}.
\end{proof}
\section{Proofs of Lemmas and Propositions}
\subsection{Proof of Lemma~\ref{lem:new-estimseq}}\label{ap:new-estimseq}
The optimality condition of $T_L(y)$ can be written $\nabla h(y) - g_L(y) + \xi_L(y) = 0$ with $\xi_L(y)\in\partial\psi(T_L(y))$. By strong convexity of $f$ we have 
\begin{align*}
    f(x) - \frac{\mu}{2}\|x-y\|^2 &\geq h(y) + \nabla h(y)^T(x-y) +\psi(x)\\
    &= h(y) + \nabla h(y)^T(x- T_L(y)) + \nabla h(y)^T(T_L(y) - y) + \psi(x)\\
    &\geq h(T_L(y)) -\frac{L}{2}\|y-T_L(y)\|^2 + \nabla h(y)^T(x- T_L(y)) + \psi(x)\\
    &= h(T_L(y)) -\frac{1}{2L}\|g_L(y)\|^2 + g_L(y)^T(x- T_L(y))\\
    &- \xi_L(y)^T(x- T_L(y)) +\psi(x)\\
    &\geq h(T_L(y)) -\frac{1}{2L}\|g_L(y)\|^2 + g_L(y)^T(x- T_L(y)) +\psi(T_L(y))\\
    &= f(T_L(y)) -\frac{1}{2L}\|g_L(y)\|^2 + g_L(y)^T(x - y) +g_L(y)^T(y - T_L(y))\\
    &= f(T_L(y)) +\frac{1}{2L}\|g_L(y)\|^2 + g_L(y)^T(x- y)
\end{align*}
\subsection{Proof of Proposition~\ref{prop:proofAPG}}\label{ap:proofAPG}
Recall that with this update of $a_k$ we have $\tau_k = \sqrt{\kappa}, \forall k \geq 0$.\\
We have $m_0(x) = a_0f^*$ and Lemma~\ref{lem:new-estimseq} implies $m_k(x) \leq a_0f^* + (A_k-a_0)f^*$. This leads to the useful bound
\BEQ
\underset{x\in\reals^n}{\min }m_k(x) + \frac{a_0\mu}{2}\|x-x_0\|^2 \leq A_kf^* + \frac{a_0\mu}{2}\|x^*-x_0\|^2
\EEQ
Then we show by induction that $A_kf(y_k) \leq \left(f(x_0) - f^* \right) + \underset{x\in\reals^n}{\min} m_k(x) + \frac{a_0\mu}{2}\|x-x_0\|^2 $.\\
At rank $k=0$, $a_0 = 1$, $m_0(x) = f^*$ and $y_0 = x_0$ thus $A_0f(y_0) = f(x_0) - f^* + f^*$.\\
Then suppose the property is true at rank $k$. Denote $\phi_{k+1}(x) \ m_{k+1}(x) + \frac{a_0\mu}{2}\|x-x_0\|^2$
\begin{align*}
    \phi_{k+1}(x) &= m_k(x) +\frac{a_0\mu}{2}\|x-x_0\|^2 \\
    &+ a_{k+1}\left( f(T_L(x_{k+1})) + g_L(x_{k+1})^T(x-x_{k+1}) +\frac{1}{2L}\|g_L(x_{k+1})\|^2\right)\\
    &+ a_{k+1}\frac{\mu}{2}\|x-x_{k+1}\|^2\\
    & \text{using the quadratic form of the estimate sequence}\\
    &\text{ and the induction hypothesis }\\
    & \geq  A_kf(y_k)- (f(x_0) - f^*) + \frac{A_k\mu}{2}\|x-v_k\|^2 + \frac{a_{k+1}\mu}{2}\|x-x_{k+1}\|^2\\
    &+ a_{k+1}\left( f(T_L(x_{k+1})) + g_L(x_{k+1})^T(x-x_{k+1}) +\frac{1}{2L}\|g_L(x_{k+1})\|^2\right)\\
    & \text{let $z_{k+1} = (1-\tau_k)v_k + \tau_kx_{k+1}$, by convexity of $\|x - .\|^2$}\\
    & \geq  A_kf(y_k)- (f(x_0) - f^*) + \frac{A_{k+1}\mu}{2}\|x-z_{k+1}\|^2\\
    &+ a_{k+1}\left( f(T_L(x_{k+1})) + g_L(x_{k+1})^T(x-x_{k+1}) +\frac{1}{2L}\|g_L(x_{k+1})\|^2\right)\\
    &\text{one define } \hat{x}_{k+1} = \underset{x \in \reals^n}{\argmin}\; \frac{A_{k+1}\mu}{2}\|x-z_{k+1}\|^2 + a_{k+1}g_L(x_{k+1})^Tx \\
    & \geq  A_kf(y_k)- (f(x_0) - f^*) + \frac{A_{k+1}\mu}{2}\|\hat{x}_{k+1}-z_{k+1}\|^2\\
    &+ a_{k+1}\left( f(T_L(x_{k+1})) + g_L(x_{k+1})^T(\hat{x}_{k+1}-x_{k+1}) +\frac{1}{2L}\|g_L(x_{k+1})\|^2\right)\\
    & =  A_kf(y_k) - (f(x_0) - f^*) + \frac{A_{k+1}\mu}{2}\|\frac{\tau_k}{\mu}g_L(x_{k+1})\|^2\\
    &+ a_{k+1}\left( f(T_L(x_{k+1})) + g_L(x_{k+1})^T(\hat{x}_{k+1}-x_{k+1}) +\frac{1}{2L}\|g_L(x_{k+1})\|^2\right)\\
    & \text{since } \tau_k = \sqrt{\kappa}\\
    & =  A_kf(y_k) - (f(x_0) - f^*)+ \frac{A_{k+1}}{2L}\|g_L(x_{k+1})\|^2\\
    &+ a_{k+1}\left( f(T_L(x_{k+1})) + g_L(x_{k+1})^T(\hat{x}_{k+1}-x_{k+1}) +\frac{1}{2L}\|g_L(x_{k+1})\|^2\right)\\
    & \text{using Lemma~\ref{lem:new-estimseq} with $\mu$ = 0}\\ 
    & \geq  A_k\left(f(T_L(x_{k+1})) + g_L(x_{k+1})^T(y_k - x_{k+1}) +\frac{1}{2L}\|g_L{x_{k+1}}\|^2)\right) \\
    &- (f(x_0) - f^*)+\frac{A_{k+1}}{2L}\|g_L(x_{k+1})\|^2 \\
    &+ a_{k+1}\left( f(T_L(x_{k+1})) + g_L(x_{k+1})^T(\hat{x}_{k+1}-x_{k+1}) +\frac{1}{2L}\|g_L(x_{k+1})\|^2\right)\\
    &= A_{k+1}f(T_L(x_{k+1}))- (f(x_0) - f^*) \\
    &+ \frac{A_{k+1}}{L}\|g_L(x_{k+1})\|^2 + A_{k+1}g_L(x_{k+1})^T\left((1-\tau_k)y_k + \tau_k\hat{x}_{k+1} - x_{k+1} \right)\\
    & \text{recall that } \hat{x}_{k+1} = z_{k+1}-\frac{\tau_k}{\mu}g_L(x_{k+1}) \text{ and  that } \frac{\tau_k^2}{\mu} = \frac{1}{L}\\
    &= A_{k+1}f(T_L(x_{k+1}))- (f(x_0) - f^*) \\
    &+ A_{k+1}g_L(x_{k+1})^T\left((1-\tau_k)y_k + \tau_kz_{k+1} - x_{k+1} \right)
\end{align*}
We conclude by combining the formulae defining $z_{k+1}$ and $x_{k+1}$.\\
\begin{align*}
    (1-\tau_k)y_k + \tau_kz_{k+1} - x_{k+1} &= (1-\tau_k)y_k + \tau_k(1-\tau_k)v_k + (\tau_k^2-1)x_{k+1}\\
    &=  (1-\tau_k)y_k + \tau_k(1-\tau_k)v_k \\
    &+ (\tau_k^2-1)\left(\frac{\tau_k}{1+\tau_k}v_k + \frac{1}{1+\tau_k}y_k \right)\\
    &= 0
\end{align*}
finally since $y_{k+1} = T_L(x_{k+1})$ we get $ (f(x_0) - f^*) + \underset{x\in\reals^n}{\min\;} \phi_{k+1}(x) \geq A_{k+1}f(y_{k+1})$. 
In addition, $A_{k+1} = \frac{1}{1-\sqrt{\kappa}}A_k$ and $a_0 =1$ leads to $A_k = \left(1-\sqrt{\kappa} \right)^{-k}$.
\subsection{Proof of Proposition~\ref{prop:APG-estim-bound}}\label{ap:APG-estim-bound}
We follow the proof of Proposition~\ref{prop:proofAPG}. However here we have a different bound on $m_k(x)$.\\
$m_k(x) \leq a_0f^* +(A_k-a_0)f(x) + \displaystyle\sum_{i=1}^k \frac{a_i}{2}(\mu_i-\mu)\|x_i-x\|^2  , \forall k \geq 0 $. Which leads to 
\begin{align}
    \underset{x \in \reals^n}{\min } m_k(x) + \frac{a_0\mu_0}{2}\|x-x_0\|^2 &\leq A_kf^* + \displaystyle\sum_{i=1}^k \frac{a_i}{2}(\mu_i-\mu)\|x_i-x^*\|^2 + \frac{a_0\mu_0}{2}\|x_0-x^*\|^2\nonumber\\
     &=A_kf^* + \displaystyle\sum_{i=0}^k \frac{a_i}{2}(\mu_i-\mu)\|x_i-x^*\|^2 + \frac{a_0\mu}{2}\|x_0-x^*\|^2 \label{eq:P1_est}
\end{align}
Now we show by induction that $\boxed{A_kf(y_k) \leq f(x_0) - f^* +\underset{x \in \reals^n}{\min } m_k(x) + \frac{a_0\mu_0}{2}\|x-x_0\|^2}$. At rank $k=0$, $A_0 = a_0 = 1, y_0= x_0$ and $m_0(x) = a_0f^*$, so the property is true. Suppose it is true at rank $k$. LEt $\phi_{k+1}(x) = m_{k+1}(x) + \frac{a_0\mu_0}{2}\|x-x_0\|^2$.
\begin{align*}
    \phi_{k+1}(x) &\leq m_k(x) + \frac{a_0\mu_0}{2}\|x -x_0\|^2 \\
    &+a_{k+1}\left( f(T_L(x_{k+1})) + g_L(x_{k+1})^T(x-x_{k+1}) +\frac{1}{2L}\|g_L(x_{k+1})\|^2 \right)\\
    &  + a_{k+1}\frac{\mu_{k+1}}{2}\|x-x_{k+1}\|^2\\
    & \text{using the quadratic form of the estimate sequence and the induction hypothesis }\\
    & \geq  A_kf(y_k)- (f(x_0) - f^*) + \frac{\sum_{i=0}^ka_i\mu_i}{2}\|x-v_k\|^2 + \frac{a_{k+1}\mu_{k+1}}{2}\|x-x_{k+1}\|^2\\
    &+ a_{k+1}\left( f(T_L(x_{k+1})) + g_L(x_{k+1})^T(x-x_{k+1}) +\frac{1}{2L}\|g_L(x_{k+1})\|^2\right)\\
    & \text{ let $z_{k+1} = (1-\tau_k)v_k + \tau_kx_{k+1}$, by convexity of $\|x - .\|^2$ } \\
    &\text{ and since the $(\mu_i)$ are non increasing} \\
    & \geq  A_kf(y_k)- (f(x_0) - f^*) + \frac{A_{k+1}\mu_{k+1}}{2}\|x-z_{k+1}\|^2\\
    &+ a_{k+1}\left( f(T_L(x_{k+1})) + g_L(x_{k+1})^T(x-x_{k+1}) +\frac{1}{2L}\|g_L(x_{k+1})\|^2\right)\\
    &\text{one define } \hat{x}_{k+1} = \underset{x \in \reals^n}{\argmin}\; \frac{A_{k+1}\mu_{k+1}}{2}\|x-z_{k+1}\|^2 + a_{k+1}g_L(x_{k+1})^Tx \\
    & \geq  A_kf(y_k)- (f(x_0) - f^*) + \frac{A_{k+1}\mu_{k+1}}{2}\|\hat{x}_{k+1}-z_{k+1}\|^2\\
    &+ a_{k+1}\left( f(T_L(x_{k+1})) + g_L(x_{k+1})^T(\hat{x}_{k+1}-x_{k+1}) +\frac{1}{2L}\|g_L(x_{k+1})\|^2\right)\\
    & =  A_kf(y_k) - (f(x_0) - f^*) + \frac{A_{k+1}\mu_{k+1}}{2}\|\frac{\tau_k}{\mu_{k+1}}g_L(x_{k+1})\|^2\\
    &+ a_{k+1}\left( f(T_L(x_{k+1})) + g_L(x_{k+1})^T(\hat{x}_{k+1}-x_{k+1}) +\frac{1}{2L}\|g_L(x_{k+1})\|^2\right)\\
    & \text{here } \tau_k = \sqrt{\kappa_k}\\
    & =  A_kf(y_k) - (f(x_0) - f^*)+ \frac{A_{k+1}}{2L}\|g_L(x_{k+1})\|^2\\
    &+ a_{k+1}\left( f(T_L(x_{k+1})) + g_L(x_{k+1})^T(\hat{x}_{k+1}-x_{k+1}) +\frac{1}{2L}\|g_L(x_{k+1})\|^2\right)\\
    & \text{using Lemma~\ref{lem:new-estimseq} with $\mu = 0$}\\ 
    & \geq  A_k\left(f(T_L(x_{k+1})) + g_L(x_{k+1})^T(y_k - x_{k+1}) +\frac{1}{2L}\|g_L{x_{k+1}}\|^2)\right) \\
    &- (f(x_0) - f^*)+\frac{A_{k+1}}{2L}\|g_L(x_{k+1})\|^2 \\
    &+ a_{k+1}\left( f(T_L(x_{k+1})) + g_L(x_{k+1})^T(\hat{x}_{k+1}-x_{k+1}) +\frac{1}{2L}\|g_L(x_{k+1})\|^2\right)\\
    &= A_{k+1}f(T_L(x_{k+1}))- (f(x_0) - f^*) \\
    &+ \frac{A_{k+1}}{L}\|g_L(x_{k+1})\|^2 + A_{k+1}g_L(x_{k+1})^T\left((1-\tau_k)y_k + \tau_k\hat{x}_{k+1} - x_{k+1} \right)\\
    & \text{recall that } \hat{x}_{k+1} = z_{k+1}-\frac{\tau_k}{\mu_{k+1}}g_L(x_{k+1}) \text{ and  that } \frac{\tau_k^2}{\mu_{k+1}} = \frac{1}{L}\\
    &= A_{k+1}f(T_L(x_{k+1}))- (f(x_0) - f^*) \\
    &+ A_{k+1}g_L(x_{k+1})^T\left((1-\tau_k)y_k + \tau_kz_{k+1} - x_{k+1} \right)
\end{align*}
We conclude by combining the formulae defining $z_{k+1}$ and $x_{k+1}$.
\begin{align*}
    (1-\tau_k)y_k + \tau_kz_{k+1} - x_{k+1} &= (1-\tau_k)y_k + \tau_k(1-\tau_k)v_k + (\tau_k^2-1)x_{k+1}\\
    &=  (1-\tau_k)y_k + \tau_k(1-\tau_k)v_k \\
    &+ (\tau_k^2-1)\left(\frac{\tau_k}{1+\tau_k}v_k + \frac{1}{1+\tau_k}y_k \right)\\
    &= 0
\end{align*}
finally since $y_{k+1} = T_L(x_{k+1})$ we get $ (f(x_0) - f^*) + \underset{x\in\reals^n}{\min\;} \phi_{k+1}(x) \geq A_{k+1}f(y_{k+1})$, re-injecting in \eqref{eq:P1_est} gives the right bound. 
In addition, $A_{k+1} = \frac{1}{1-\sqrt{\frac{\mu_{k+1}}{L}}}A_k$ and $a_0 =1$ leads to $A_k = \prod_{i=1}^k\left(1-\sqrt{\frac{\mu_i}{L}}\right)^{-1}$.
\subsection{Proof of Lemma~\ref{lem:bound_sec_seq}}\label{ap:bound_sec_seq}
From the definition of $x_{k+1}$ in Algorithm~\ref{algo:FGM-const-adapt}, $x_{k+1} =\alpha_kv_k + (1-\alpha_k)y_k$ with $\alpha_k =  \frac{\tau_k}{1+\tau_k} \in [0,1]$. By convexity of $\|\cdot - x^*\|^2$ 
\BEQ\label{eq:conv}
\|x_{k+1} - x^*\|^2 \leq \alpha_k\|v_k -x^*\|^2 + (1-\alpha_k)\|y_k -x^*\|^2
\EEQ
We denote $\phi_k(x) = m_k(x) + \frac{a_0\mu_0}{2}\|x-x_0\|^2 $, we have that $v_k = \underset{x\in\reals^n}{\argmin\;} \phi_k(x)$. Note that $\phi_k(x)$ is $\left(\sum_{i=0}^ka_i\mu_i\right)$-strongly convex, which gives
\begin{align*}
    \frac{\sum_{i=0}^k a_i\mu_i}{2}\|v_k - x^*\|^2 &\leq \phi_k(x^*) - \phi^* \\ 
     &\leq \phi_k(x^*) - A_kf(y_k) + (f(x_0)-f^*) \\
    &\leq A_kf^* - A_kf(y_k) +\sum_{i=0}^ka_i\frac{\mu_i -\mu}{2}\|x_i - x^*\|^2 + (f(x_0)-f^*) \\
    &+ \frac{a_0\mu}{2}\|x_0-x^*\|^2 \\
    & \text{ since the $\mu_i$ are upperbounds on $\mu$ and $f^* - f(y_k) \leq 0$}\\
    \frac{A_k\mu}{2}\|v_k - x^*\|^2 &\leq \sum_{i=0}^ka_i\frac{\mu_i -\mu}{2}\|x_i - x^*\|^2 + (f(x_0)-f^*) + \frac{a_0\mu}{2}\|x_0-x^*\|^2 \\
    \|v_k - x^*\|^2 &\leq \sum_{i=0}^k\frac{a_i}{A_k}\frac{\mu_i -\mu}{\mu}\|x_i - x^*\|^2 +\frac{2(f(x_0)-f^*) + a_0\mu\|x_0-x^*\|^2}{\mu A_k}
\end{align*}
We can bound $\|y_k-x^*\|^2$ the same way using Corollary~\ref{cor:gap-strong}
\begin{align*}
    \|y_k-x^*\|^2 &\leq \frac{2}{\mu}(f(y_k)-f^*)\\
    \|y_k-x^*\|^2 &\leq \sum_{i=0}^k\frac{a_i}{A_k}\frac{\mu_i -\mu}{\mu}\|x_i - x^*\|^2 +\frac{2(f(x_0) - f^*) + a_0\mu\|x_0-x^*\|^2}{A_k\mu}
\end{align*}
combining these inequality in \eqref{eq:conv} gives the result.
\subsection{Proof of Lemma~\ref{lem:summable-error}}\label{ap:summable-error}
We prove our result by induction. For $k = 0$ this is true since $C_0 \geq  a_0(\mu_0 - \mu)\|x_0-x^*\|^2$. Now suppose the property is true until a rank $k \geq 0$.\\
By Lemma~\ref{lem:bound_sec_seq}, 
\begin{align*}
    \|x_{k+1}-x^*\|^2 &\leq \frac{C_0}{A_k\mu} + \displaystyle\sum_{i=0}^k\frac{a_i}{A_k}\frac{\mu_i - \mu}{\mu}\|x_i-x^*\|^2\\
    & \text{using the induction hypothesis}\\
    & \leq \frac{C_0}{A_k\mu} + \displaystyle\sum_{i=1}^k\frac{C_0}{(i+1)^2A_k\mu}\\
    &\text{ note that $\sum_{i=1}^\infty\frac{1}{i^2} \leq 2$} \\
    & \leq \frac{3C_0}{A_k\mu}
\end{align*} 
Thus
\begin{align*}
    a_{k+1}(\mu_{k+1}-\mu)\|x_{k+1}-x^*\|^2 &\leq \frac{3C_0}{\mu}\frac{a_{k+1}}{A_k}(\mu_{k+1}-\mu)\\
    &\leq \frac{3C_0}{\mu}C_1(\mu_{k+1}-\mu)\\
    \text{ using Lemma~\ref{lem:bound-ak}}\\
    &\leq \frac{3C_0}{\mu}C_1\frac{C}{(k+2)^2} \text{ since } k+1 \geq 1\\
    &\text{we have by hypothesis } C \leq \frac{\mu}{3C_1}\\
    & \leq \frac{C_0}{(k+2)^2}
\end{align*}
which concludes the proof.
\section{Numerical Experiments}\label{ap:numeric}
\begin{algorithm}[h]
\caption{APG Smooth \cite{Beck09}}
\label{algo:APGsmooth}
\begin{algorithmic}
\STATE \algorithmicrequire\;$x_0 \in \reals^n$, $L$,
\STATE $y_{-1} = y_0 = x_0$, $t_0 = 1$
\FOR{$k \geq 0$}
\STATE $\mu_k = \underset{i = 0..k}{\min}\hat{\mu}_i$
\STATE $t_{k+1} = \frac{1+\sqrt{1+t_k^2}}{2}$
\STATE $\beta_k = \frac{t_k-1}{t_{k+1}}$
\STATE $x_{k+1} = y_{k} + \beta_k(y_k - y_{k-1})$
\STATE $y_{k+1} = T_L(x_{k+1})$
\ENDFOR
\STATE \algorithmicensure\; $y_{k+1}$.
\end{algorithmic}
\end{algorithm}
\begin{algorithm}[h]
\caption{APG Restart with Known $f^*$ \cite{Roul17}}
\label{algo:APGrestart}
\begin{algorithmic}
\STATE \algorithmicrequire\;$x_0 \in \reals^n$, $L$, $f^*$, $\gamma > 0$
\STATE $y_{-1} = y_0 = x_0$, $t_0 = 1$, $\epsilon = f(y_0) - f^*$
\FOR{$k \geq 0$}
\STATE $\mu_k = \underset{i = 0..k}{\min}\hat{\mu}_i$
\STATE $t_{k+1} = \frac{1+\sqrt{1+t_k^2}}{2}$
\STATE $\beta_k = \frac{t_k-1}{t_{k+1}}$
\STATE $x_{k+1} = y_{k} + \beta_k(y_k - y_{k-1})$
\STATE $y_{k+1} = T_L(x_{k+1})$
\IF{$f(y_{k+1}) - f^* \leq \epsilon$}
\STATE $t_{k+1} \leftarrow 1$, $x_{k+1} \leftarrow y_{k+1}$, $y_{k} \leftarrow y_{k+1}$, $\epsilon \leftarrow e^{-\gamma}\epsilon$
\ENDIF
\ENDFOR
\STATE \algorithmicensure\; $y_{k+1}$.
\end{algorithmic}
\end{algorithm}
\begin{figure}[h]
    \centering
    \includegraphics[scale=0.45]{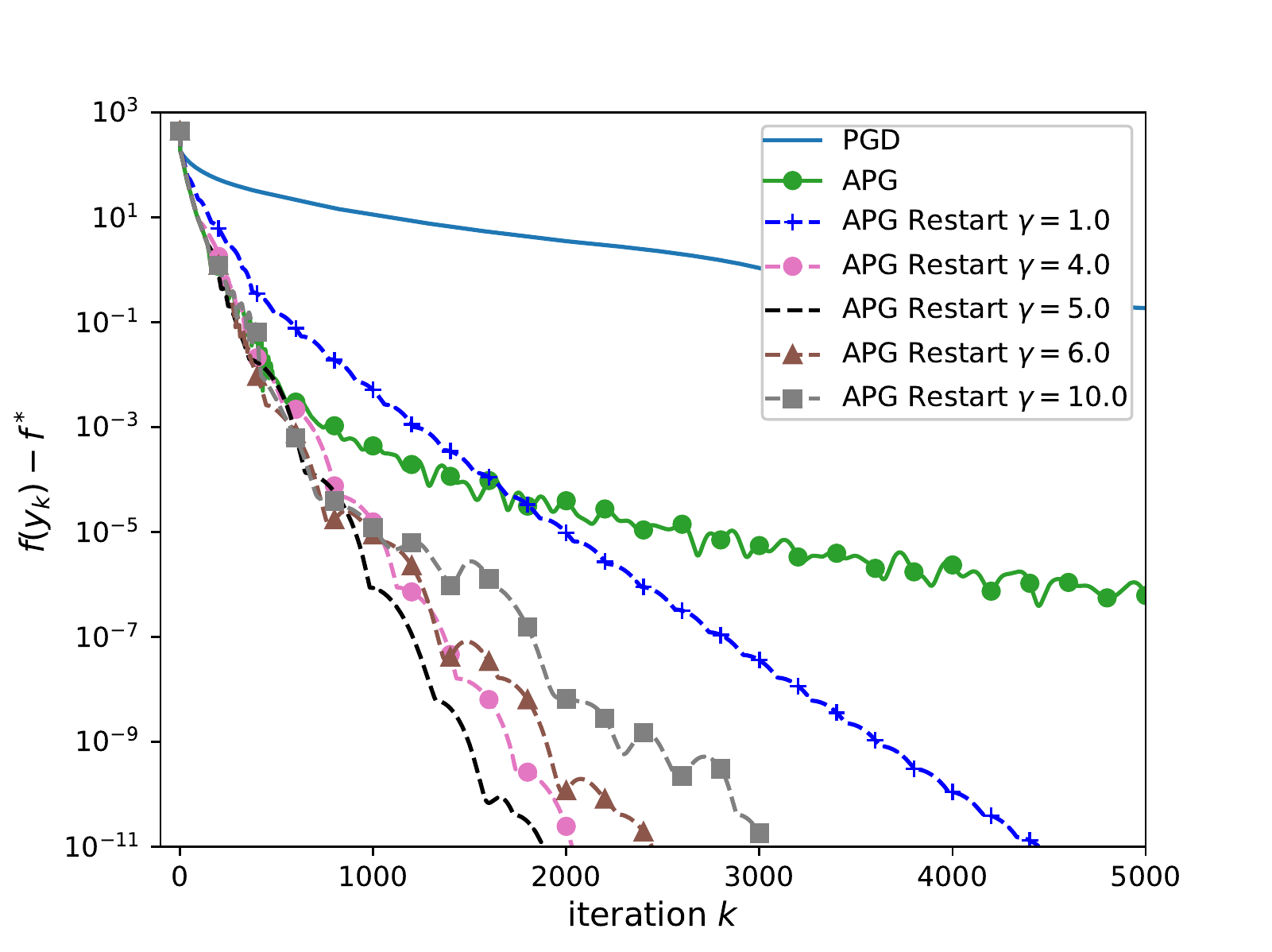}
    \caption{Solving LASSO on Sonar dataset with regularization parameter equal to 1. We observe a large variability with the choice of $\gamma$ in the restarted algorithm.}
    \label{fig:restart-gamma}
\end{figure}
\begin{figure}[h]
    \centering
    \begin{tabular}{cc}
       \includegraphics[scale = 0.43]{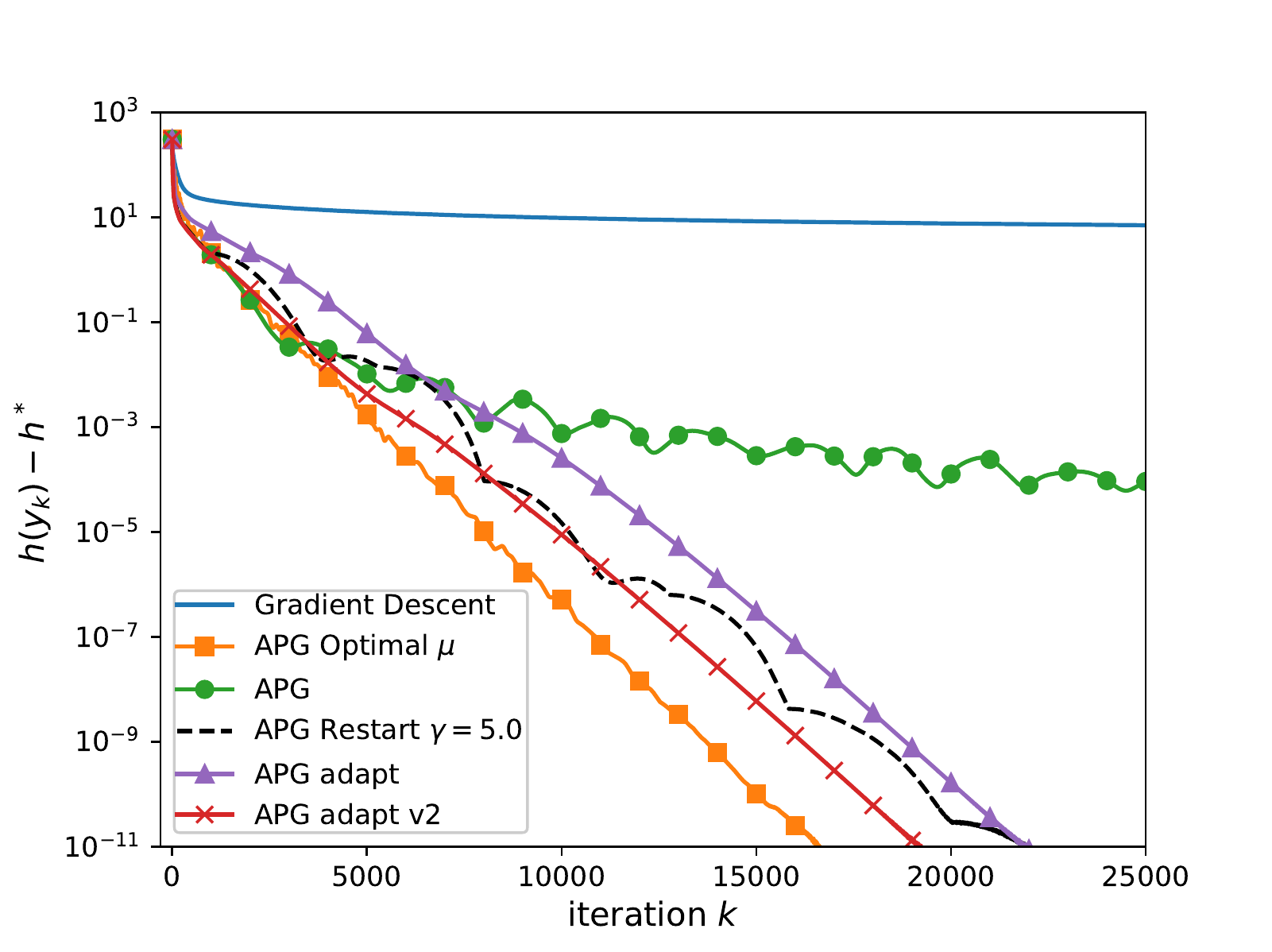}  & \includegraphics[scale = 0.43]{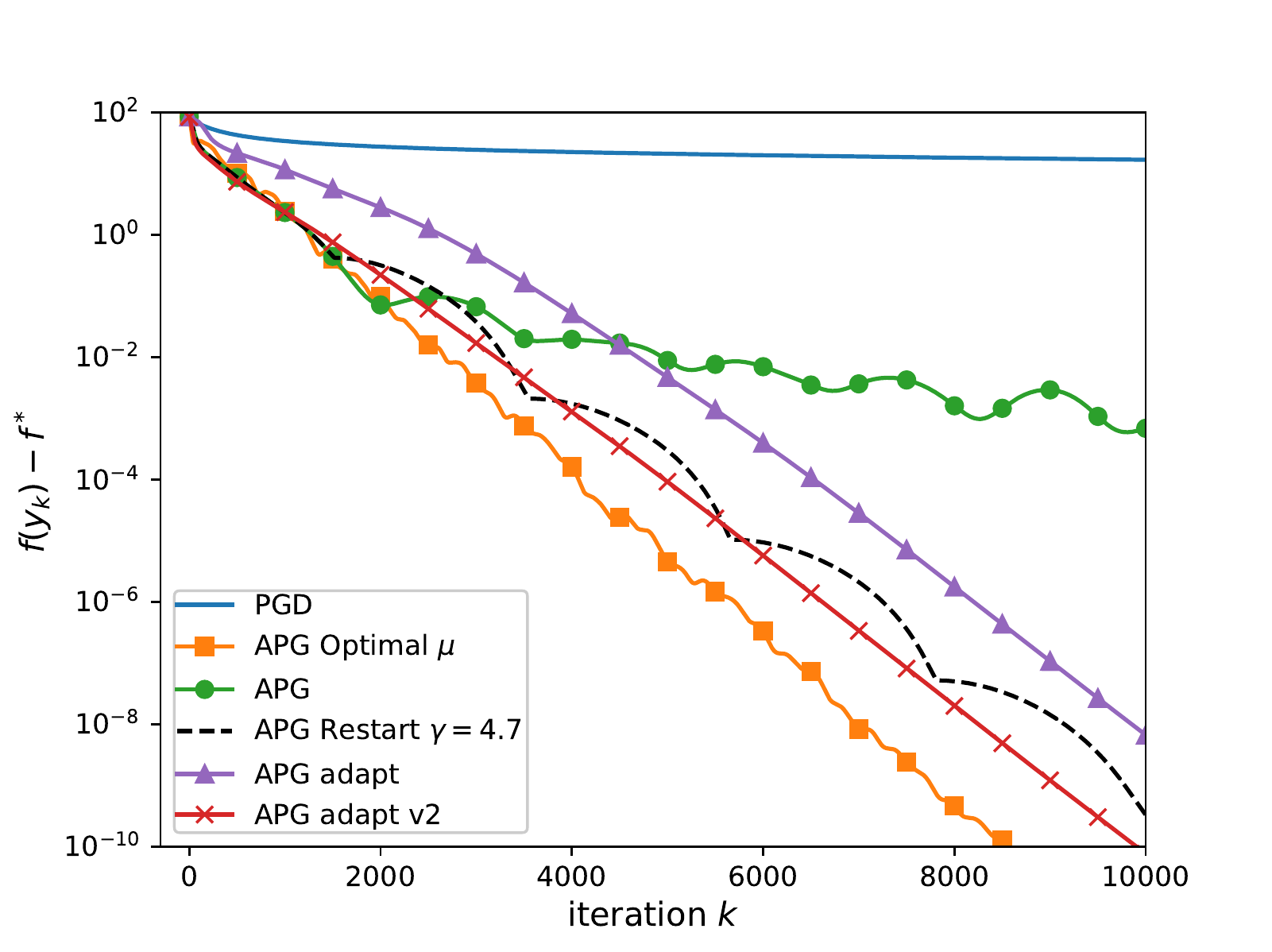} \\
       \includegraphics[scale = 0.43]{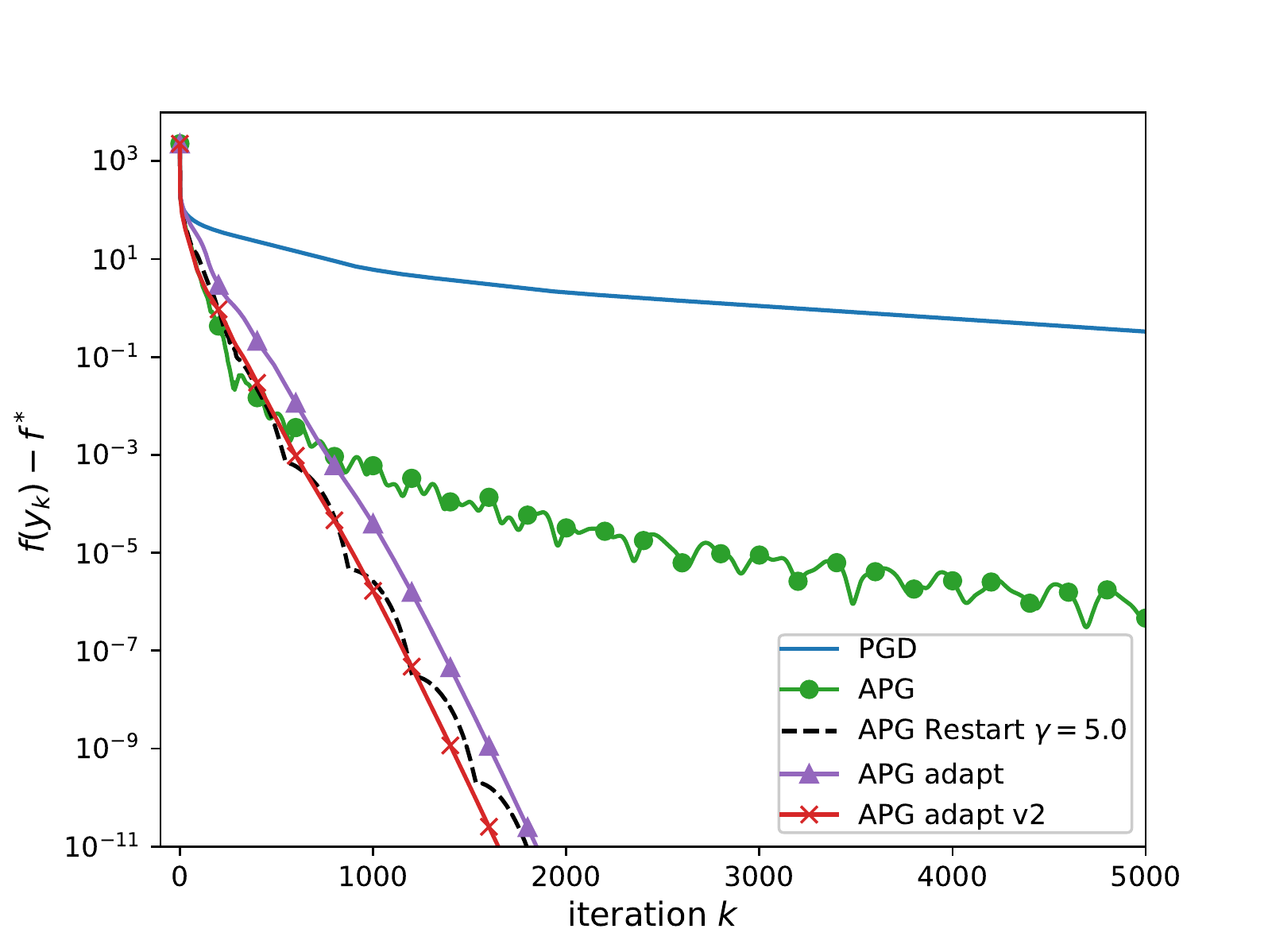} & \includegraphics[scale = 0.43]{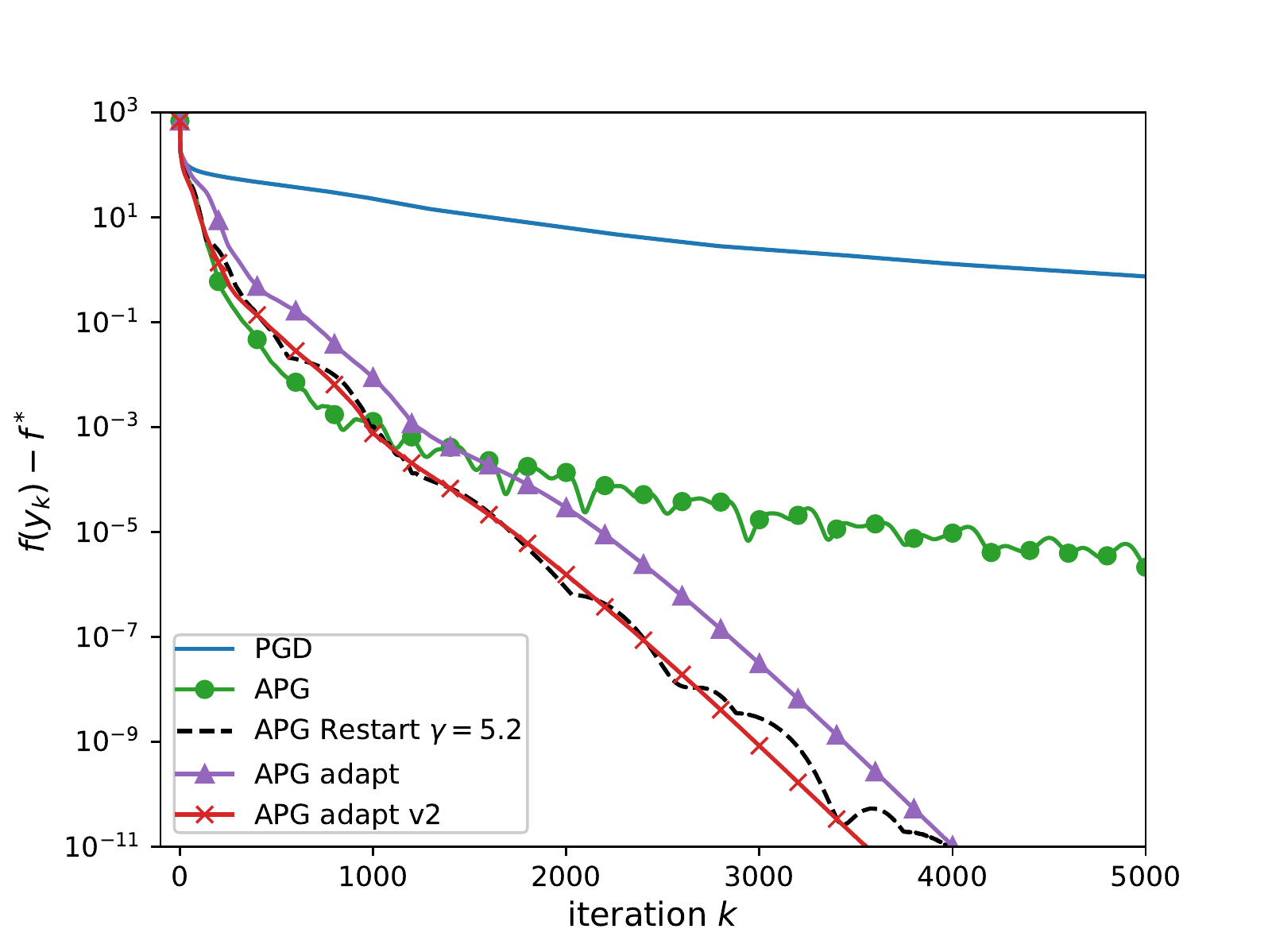}
    \end{tabular}
    \caption{Experiments on the Sonar dataset with squared loss (Top Left), regularized logistic loss with regularization parameter equal to $0.004$ (Top Right), LASSO with regularization parameter equal to 1 (Bottom Left) and dual of regularized SVM with regularization parameter equal to 1 (Bottom Right).}
    \label{fig:sonar}
\end{figure}
\newpage
In the quadratic case we dispose of a natural strong convexity parameter which is the smallest eigenvalue of the Hessian. However when the loss has a more complex structure we do not know a priori which quantity our estimates of strong convexity should be compared to. When looking at the proof of the convergence rate of Algorithm~\ref{algo:FGM-const-adapt}, the exact error term due to the fact that $\mu_k$ upper bounds $\mu$ is 
\BEQ
\frac{\mu_k}{2}\|x^*-x_{k}\|^2 - \left(f^* - f(T_L(x_{k})) - g_L(x_k)^T(x^*-x_k) -\frac{1}{2L}\|g_L(x_k)\|^2\right)
\EEQ
where $x_k$ is an iterate in Algorithm~\ref{algo:FGM-const-adapt}. We then define
\BEQ
\mu_{loc}(x) = 2\frac{f^* - f(T_L(x)) - g_L(x_k)^T(x^*-x) -\frac{1}{2L}\|g_L(x)\|^2}{\|x-x^*\|^2}
\EEQ
\begin{figure}[t]
    \centering
    \begin{tabular}{cc}
        \includegraphics[scale=0.45]{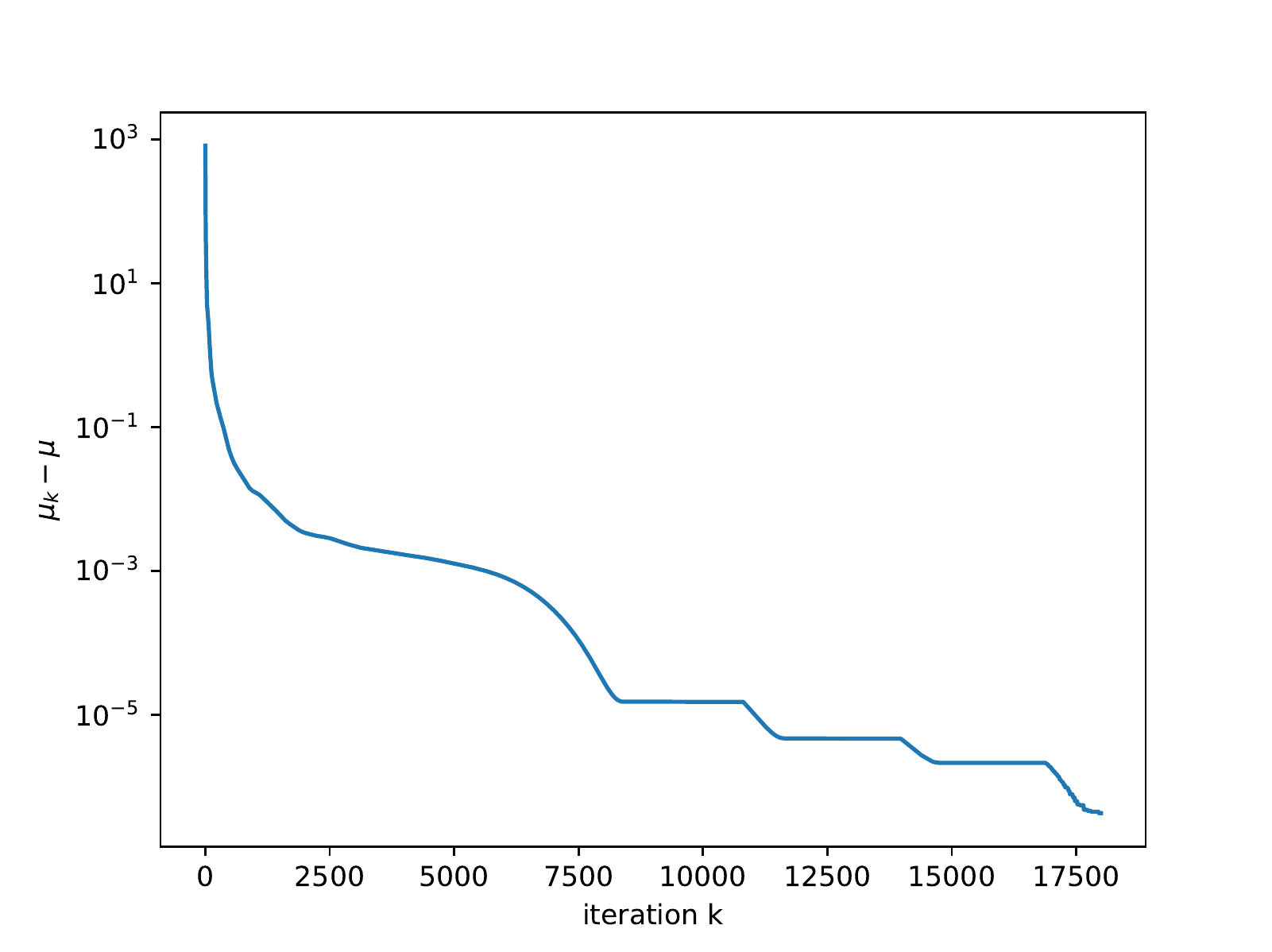}& \includegraphics[scale=0.45]{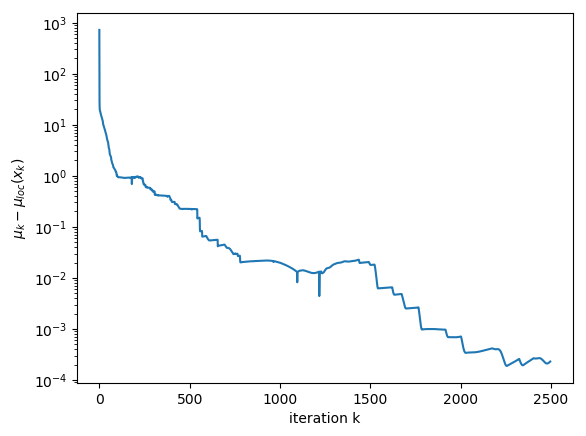}  \\
    \end{tabular}
    \caption{Left: Gap between the estimated $\mu_{k}$ and the true $\mu$ during a run of Algorithm~\ref{algo:FGM-const-adapt} on Sonar with a square loss. Right: Gap between the estimated $\mu_{k}$ and the local strong convexity parameter $\mu_{loc}(x_k)$ on Sonar with a dual SVM loss. Our estimates appear to satisfy the sublinear convergence rate needed in our robustness result. }
    \label{fig:gap}
\end{figure}
\subsection{Parameters of the losses in Figure~\ref{fig:all}}
\begin{table}[h]
    \centering
    \begin{tabular}{|c|c|c|c|}
    \hline Dataset & regularization Logit & regularization Lasso & regularization SVM\\
     \hline Musk & $\lambda\|\cdot\|^2, \lambda= 100$ & $\lambda\|\cdot\|_1,\lambda=100$ & $\frac{1}{C}\|\cdot\|^2, C = 1$ \\
     \hline Madelon & $\lambda\|\cdot\|^2, \lambda= 1000$ & $\lambda\|\cdot\|_1,\lambda=800$ & $\frac{1}{C}\|\cdot\|^2, C = 1$\\
     \hline
    \end{tabular}
    \caption{Table of the parameters used in Figure~\ref{fig:all}.}
    \label{tab:param}
\end{table}

\end{document}